\definecolor{unbleu}{rgb}{0.03, 0.15, 0.4}
 \newtheorem{theorem}{Theorem}[section]
 \newtheorem{lemma}[theorem]{Lemma}
 \newtheorem{proposition}[theorem]{Proposition}
\newtheorem{corollary}[theorem]{Corollary}
\newtheorem{maintheorem}{Theorem} %\theoremstyle{definition}の下におくと斜体にならない!
\theoremstyle{definition}
\newtheorem{remark}[theorem]{Remark}
\def\ul#1{\underline{#1}}%  underline
\newcommand{\N}{\mathbb N}%     natural number
\newcommand{\Int}{{\rm int}}
\newcommand{\Rot}{{\rm Rot}}
\newcommand{\rv}{{\rm rv}}
\begin{document}

\title[]{Constrained ergodic optimization for generic continuous functions}
%"Constraint Ergodic" -> "Constrained ergodic" 

\author[S.Motonaga]{Shoya Motonaga}
\address{Research Organization of Science and Technology, Ritsumeikan University, 1-1-1 Noji-higashi, Kusatsu, Shiga 525-8577, Japan}
\email{motonaga@fc.ritsumei.ac.jp}

\author[M. Shinoda]{Mao Shinoda}
\address{Department of Mathematics, Ochanomizu University, 2-1-1 Otsuka, Bunkyo-ku, Tokyo, 112-8610, Japan}
\email{shinoda.mao@ocha.ac.jp}

% \date{\today}
\subjclass[2010]{\textcolor{black}{Primary} 37E45, 37B10, 37A99}
%\textcolor{black}{; Secondary 37J51}}
\keywords{}

%%%%%%%%%%%%%%%%%%%%%%%%%%%%%%%%%%%%%%%%%%%%%%%%%%%%%%%%%
%%%%%%%%%%%%%%%%%%%%%%%%%%%%%%%%%%%%%%%%%%%%%%%%%%%%%%%%%
\begin{abstract}
One of the fundamental results of ergodic optimization asserts that
for any dynamical system on a compact metric space with the specification property and
for a generic continuous function $f$ every invariant probability measure that maximizes the space average of $f$ must have zero entropy. 
We establish the analogical result in the context of constrained ergodic optimization, which is introduced by Garibaldi and Lopes (2007).
\end{abstract}

\maketitle
% %\newpage

% \tableofcontents
%%%%%%%%%%%%%%%%%%%%%%%%%%%%%%%%%%%%%%%%%%%%%%%%%%%%%%%%
\section{Introduction}

Let $T:X\rightarrow X$ be a continuous map on a compact metric space and $\mathcal{M}_T(X)$ be the space of Borel probability measures endowed with the weak*-topology. 
Let $C(X, \mathbb{R})$ be the space of real-valued continuous functions on $X$ with the supremum norm $\|\cdot\|_\infty$. For each $f\in C(X,\mathbb{R})$ we consider the maximum ergodic average
\begin{align}
    \beta(f)=\sup_{\mu\in \mathcal{M}_T(X)}\int f\ d\mu
    %=\sup_{x\in X}\limsup_{n\to\infty}\frac{1}{n}\sum_{i=0}^{n-1}f(T^ix)
    \label{beta}
\end{align}
and the set of all maximizing measures of $f$
\begin{align}
    \mathcal{M}_{{\rm max}}(f)=\left\{\mu\in \mathcal{M}_T(X): \int f\ d\mu=\beta(f)\right\}.
    \label{maxset}
\end{align}
The functional $\beta$ and the set $\mathcal{M}_{{\rm max}}(f)$ are main objects in ergodic optimization, which has been actively studied for a decade (see for more details \cite{Jen06survey, Jen2017}).

Constrained ergodic optimization, which is introduced by Garibaldi and Lopes in \cite{GarLop07}, investigates the analogical objects as \eqref{beta} and \eqref{maxset} under some constraint. 
Introducing a continuous $\mathbb{R}^d$-valued function $\varphi\in C(X,\mathbb{R}^d)$
%$\varphi: X\rightarrow \mathbb{R}^d$ 
playing the role of a constraint, 
we define the {\it rotation set} of $\varphi=(\varphi_1, \ldots, \varphi_d)$ by
\begin{align*}
    {\rm Rot}(\varphi)=\left\{\rv_{\varphi}(\mu)\in \mathbb{R}^d: \mu\in \mathcal{M}_T(X)\right\},
\end{align*}
where $\rv_{\varphi}(\mu)=(\int \varphi_1 d\mu, \ldots, \int\varphi_d d\mu)$.
For $h\in {\rm Rot}(\varphi)$, the fiber $\rv_{\varphi}^{-1}(h)$ is called the {\it rotation class} of $h$.
A point $h\in {\rm Rot}(\varphi)$ is called a {\it rotation vector}.
The terminology comes from Poincar\'e's rotation numbers for circle homeomorphisms. Many authors study the properties of rotation sets and characterize several dynamics in terms of rotation vectors \cite{GM99,Jen01rotation,K92, K95,KW14,KW16,KW19}.

We define the maximum ergodic average for a continuous function $f$ with constraint $h\in {\rm Rot}(\varphi)$ by
\begin{align}
    \beta^\varphi_h(f)=\sup_{\mu\in \rv_{\varphi}^{-1}(h)}\int f\ d\mu
    \label{beta_constraint}
\end{align}
and the set of all relative maximizing measures of $f$ with a rotation vector $h\in {\rm Rot}(\varphi)$ by
\begin{align*}
    \mathcal{M}^\varphi_h(f)=\left\{\mu\in \mathcal{M}_{T}(X): \beta^\varphi_h(f)=\int f\ d\mu,\ \rv_{\varphi}(\mu)=h\right\}.
%    \label{maxset_constraint}
\end{align*}
Note that this %"this constraint"->"this"
formulation is a generalization of (unconstrained) ergodic optimization, since for a constant constraint $\varphi\equiv h$ and its unique rotation vector $h$ the rotation set of $h$ is $\mathcal{M}_T(X)$.

% For a dynamical system $(X,T)$ with the specification property and a continuous constraint  $\varphi:X\rightarrow \mathbb{R}$, we can also interpret  \eqref{beta_constraint} by time average as in \eqref{beta} \cite{Zhao19}[Theorem A].
% Letting $h\in {\rm Rot}(\varphi)$, we have
% \begin{align*}
%     \beta_h^\varphi(f)=\sup_{x\in X_\varphi(\alpha)}\limsup_{n\to\infty}\frac{1}{n}\sum_{i=0}^{n-1}f(T^ix)
% \end{align*}
% where $X_\varphi(\alpha)$ is the level-set of the Birkhoff average for $h$.
% \begin{align*}
%     X_\varphi(\alpha)=\{x\in X: \lim_{n\to\infty}\frac{1}{n}\sum_{i=0}^{n-1}\varphi(T^ix)=h\}
% \end{align*}

%\textcolor{blue}{
%A motivation for constraint ergodic optimization...
%cocycle\cite{BochiRams16}?
%subadditive version \cite{Zhao19}
%}

We have much interest in the above constrained optimization because the constraints provide information about the asymptotic behavior of orbits.
Such problem is originally studied by Mather \cite{Mather91} and Ma\~{n}\'{e} \cite{Mane96} for Euler-Lagrange flows, where the asymptotic homological position of the trajectory in the configuration space is given as a constraint.
Recently, Bochi and Rams \cite{BochiRams16} proved that the Lyapunov optimizing measures for one-step cocycles of $2 \times 2$ matrices have zero entropy on the Mather sets under some conditions, which implies that a low complexity phenomena occurs %"is typical"->"occurs"
in noncommutative setting such as a Lyapunov-optimization problem for one-step cocycles.
A relative Lyapunov-optimization is mentioned for their future research but to the authors’ knowledge the classical commutative counterpart, typicality of zero entropy of relative maximizing measure, has not been established yet.
We remark that Zhao \cite{Zhao19} studies constrained ergodic optimization for asymptotically additive potentials for the application in the study of multifractal analysis.

% In this paper we restrict our attention to symbolic dynamics. 

% Denote by $\mathcal{M}_\sigma^e(\Omega)$ the set of ergodic ones.
% An invariant measure $\mu$ is called a periodic measure if it support is a single periodic orbit.
% Let $\mathcal{M}_\sigma^p(\Omega)$ be the set of periodic ones. 
% Note that $\mathcal{M}_\sigma^p(\Omega)\subset \mathcal{M}_\sigma^e(\Omega)$.

It is natural to extend the fundamental results of unconstrained ergodic optimization to constrained one.
In \cite{GarLop07}, several general results on constrained ergodic optimization are provided, especially, uniqueness of maximizing measures with any constraint for generic continuous functions is asserted.
%Morris でprevalentも言えている．
Moreover, prevalent uniqueness of maximizing measures for continuous functions in the constrained settings easily follows from \cite{Mor21} (see Appendix \ref{AppendixB}).
However, in these studies, the differences between constrained ergodic optimization and unconstrained one are not mentioned explicitly.
In some cases, constraints prevent existence of relatively maximizing periodic measures (see Remark \ref{Rmk:periodic}), which should give rise to the problem of existence of periodic measures in a given rotation class.
Moreover, in contrast to Morris’ theorem \cite{Mor2010} which asserts that for any dynamical systems with the specification property every maximizing measure for a generic continuous function has zero entropy, we can easily verify that there exists a constraint such that for a generic potential its unique relatively maximizing measure has positive entropy (see Proposition~\ref{positive_entropy}).
Thus it is important to investigate the condition that the statement of Morris’s theorem holds for constrained ergodic optimization.

In this paper, we study the structure of rotation classes and the generic property in constrained ergodic optimization for symbolic dynamics. Our first main result is the density of periodic measures with some rational constraints. 
This is an analogical result to Sigmund's work for a dynamical system with the specification \cite{Sigmund}.
The difficulty in the constrained case comes from the existence of a measure which is a convex combination of ergodic measures with different rotation vectors.
This prevents us to use the ergodic decomposition in a given rotation class. 
To overcome this difficulty, our approach requires a certain finiteness for both of a subshift and a constraint function. 
Moreover, a detailed analysis is needed to construct a periodic measure approximating a given invariant one with the same rotation vector.
%This prevents us to use the ergodic decomposition and we need to carefully argue a periodic measure approximeting a given invariant measure $\nu$ achieves the same rotation vector as $\nu$.  
Let $(\Omega, \sigma)$ be an irreducible subshift of finite type %SFT -> sofic shift
with finite alphabets
(See \S\ref{symbolic} below).
A function $\varphi\in C(\Omega, \mathbb{R}^d)$ is said to be {\it locally constant} if for each $i=1, \ldots, d$ there exists $k_i\geq0$ such that
\begin{align*}
    \varphi_i(x)=\varphi_i(y)
    \quad \mbox{if}\quad
    x_j=y_j\ \mbox{for}\ j=0, \ldots, k_i.
\end{align*}
Denote by $\mathcal{M}_\sigma^p(\Omega)$ the set of invariant measures %measure->measures
supported on a single periodic orbit.
\begin{maintheorem}
\label{density_of_periodic_measures}
Let $(\Omega, \sigma)$ be an irreducible subshift of finite type. 
Let $\varphi=(\varphi_1, \ldots, \varphi_d)\in C(\Omega, \mathbb{Q}^d)$ be a locally constant function and $h\in \Int(\Rot(\varphi))\cap \mathbb{Q}^d$.
Then the set
$\rv_{\varphi}^{-1}(h)\cap \mathcal{M}_\sigma^p(\Omega)$ is dense in $\rv_{\varphi}^{-1}(h)$.
% \begin{align*}
%     \closure\left(\rv_{\varphi}^{-1}(h)\cap \mathcal{M}_T^p(\Omega)\right)=\rv_{\varphi}^{-1}(h).
% \end{align*}
\end{maintheorem}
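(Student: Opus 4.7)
The overall strategy is to approximate $\mu$ by a periodic measure $\nu$ via Sigmund's density theorem, then surgically attach short ``correction cycles'' to the periodic orbit supporting $\nu$ so that the concatenated orbit has rotation vector exactly $h$. I would first reduce to the case of an irreducible edge SFT: by Fischer's cover, $(\Omega,\sigma)$ is a one-block factor of an irreducible SFT, and after higher-block recoding the latter is the edge shift of an irreducible finite directed graph $G$ on which $\varphi$ is a $\mathbb Q^d$-valued labeling of edges. Since the factor map sends periodic measures to periodic measures and preserves integrals against $\varphi$, density upstairs implies density downstairs, so we may assume $(\Omega,\sigma)$ itself is this edge SFT; after clearing denominators by a positive integer $q$, the labeling $q\varphi$ is $\mathbb Z^d$-valued.

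Because $\varphi$ is locally constant and $G$ is finite, $\Rot(\varphi)$ is a rational polytope, and by Sigmund plus continuity of $\rv_\varphi$ the rotation vectors of periodic measures are dense in it. Together with $h\in\Int(\Rot(\varphi))\cap\mathbb Q^d$, this supplies cycles $D_0,\dots,D_d$ in $G$ whose rotation vectors $w_0,\dots,w_d$ place $h$ in the interior of $\mathrm{conv}\{w_0,\dots,w_d\}$; equivalently, the vectors $u_l:=|D_l|(w_l-h)$ positively span $\mathbb R^d$. Now fix $\mu\in\rv_\varphi^{-1}(h)$ and $\varepsilon>0$. Since irreducible SFTs have the specification property, Sigmund's theorem yields a periodic measure $\nu$ on a cycle $L$ of length $N$ with $d(\mu,\nu)<\varepsilon/2$ and $\|w_L-h\|<\delta$, where $\delta\to 0$ as $\varepsilon\to 0$. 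Using irreducibility of $G$ to supply connecting paths of bounded length, I would concatenate $L$ with $n_l$ copies of each $D_l$ into a single cycle $C$. The requirement $\rv_\varphi(\nu_C)=h$ reduces to
\begin{align*}
\sum_{l=0}^d n_l\,u_l=N(h-w_L)+(\text{bridge correction}),
\end{align*}
which is solvable over $\mathbb R_{\ge 0}$ by positive spanning with total weight $\sum_l n_l|D_l|=O(N\delta+1)$, so the correction occupies only a fraction $O(\delta)+O(1/N)$ of the mass of $\nu_C$, giving $d(\mu,\nu_C)<\varepsilon$.

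The main obstacle is that the $n_l$ must be non-negative \emph{integers} and the total rotation of $C$ must coincide with the lattice point $N_{\rm tot}\cdot h$. Rounding the real solution to $\mathbb Z_{\ge 0}$ introduces a residual of size $O(\max_l|D_l|)$ in $\frac{1}{q}\mathbb Z^d$ that cannot be ignored. I plan to absorb this via two devices. First, since $0\in\Int\mathrm{conv}\{u_l\}$, there is a fixed positive integer relation $\sum_l\lambda_l u_l=0$ with $\lambda_l>0$; adding large common multiples of $(\lambda_l)$ to any real solution forces every $n_l$ into $\mathbb Z_{>0}$ without altering the equation. Second, replicating $L$ a suitable number of times adjusts $N_{\rm tot}$ into the arithmetic progression on which $N_{\rm tot}\cdot h$ lands in the correct sublattice of $\frac{1}{q}\mathbb Z^d$. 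Carrying out both adjustments simultaneously while retaining weak*-closeness to $\mu$ is the principal technical work, and is precisely what necessitates the joint finiteness of the sofic structure and of the range of $\varphi$ in the hypotheses.
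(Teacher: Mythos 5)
Your strategy is genuinely different from the paper's: you approximate $\mu$ by a \emph{single} periodic measure with rotation vector merely close to $h$ and then graft on correction cycles, whereas the paper first approximates $\mu$ by a convex combination $\sum_{i=1}^{d+1}\lambda_i\mu_i$ of $d+1$ periodic measures whose rotation vectors form a simplex with $h$ in its interior and with \emph{exact} rotation vector $h$, proves the $\lambda_i$ are rational (this is where $h\in\mathbb{Q}^d$ and $\varphi\in C(\Omega,\mathbb{Q}^d)$ enter), and then concatenates the $d+1$ orbits in the exact rational proportions, cancelling the junction error $\delta_\varphi$ with an explicitly computed corrective block whose exponents are integers by construction (divisibility is built in through the constants $A$ and $R$). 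The paper also avoids bridge words entirely: by the sofic follower-set lemma, orbits sharing a common word $u$ of length $\kappa$ concatenate with no gaps, and all $d+1$ orbits can be assumed to contain $u$ because each $\mu_i$ gives $[u]$ positive mass. Your Fischer-cover reduction is a legitimate substitute for that step.

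The genuine gap is at the integrality stage. Writing the requirement as $\sum_{l}n_l u_l = -k\,N(w_L-h)-(\text{bridge correction})$, your first device correctly shows that the set of values $\sum_l n_lu_l$ with $n_l\in\mathbb{Z}_{\ge0}$ is the full subgroup $G_0\subset\frac1q\mathbb{Z}^d$ generated by $u_0,\dots,u_d$; but you never show the right-hand side lies in $G_0$, and it need not: $G_0$ is in general a proper finite-index sublattice, and the bridge contribution is a fixed vector with no reason to sit in the correct coset. Your second device, replicating $L$, only multiplies the term $N(w_L-h)$ by $k$; the resulting cosets $kN(w_L-h)+\text{bridge} \pmod{G_0}$ range over a coset of the cyclic subgroup generated by $N(w_L-h)$ and may never meet $0$. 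A repair along your lines is possible but requires two further ideas you do not supply: (i) eliminate the bridges by choosing, for each vertex $p$ of the graph, correction cycles $D_0^{(p)},\dots,D_d^{(p)}$ all based at $p$ and surrounding $h$, and base them at the base vertex of $L$, so the equation becomes $\sum_l n_lu_l=-k\,N(w_L-h)$ with no inhomogeneous term; and (ii) take the replication count $k$ divisible by the (finite, uniformly bounded over $p$) index of $G_0$ in the ambient lattice containing $N(w_L-h)$, so that $k\,N(w_L-h)\in G_0$. Without (i) and (ii) the construction can fail to close up with rotation vector exactly $h$, which is the whole point of the theorem.
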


\begin{remark}
Theorem \ref{density_of_periodic_measures} is motivated by Theorem 10 in \cite{GarLop07}: It was shown that
% In Theorem 10 of \cite{GarLop07}, a weaker statement is asserted under much stronger assumptions: 
for a Walters potential $A$ on a subshift of finite type $\Omega$, there exists a periodic measure $\mu$ whose action $\int A \ d\mu$ approximates $\int A \ d\nu$ for $\nu\in \rv_{\varphi}^{-1}(h)$ with $\rv_{\varphi}(\mu)=h$ if $\nu$ is ergodic and the locally constant constraint $\varphi\in C(\Omega, \mathbb{Q}^d)$ is  joint recurrent in relation to $\nu$ (see \cite{GarLop07} for the definition of the joint recurrence and their precise statement).
%In Theorem 10 of \cite{GarLop07}, for a Walters potential $A$, it is shown that there exists a periodic measure whose action $\int A \ d\mu$ approximates $\int A \ d\nu$ for $\nu\in \rv_{\varphi}^{-1}(h)$ with $\rv_{\varphi}(\mu)=h$ under much stronger assumptions that $\nu$ is ergodic and the locally constant constraint $\varphi\in C(\Omega, \mathbb{Q}^d)$ is  joint recurrent in relation to $\nu$ (see \cite{GarLop07} for the definition of the joint recurrence and their precise statement). Thus Theorem \ref{density_of_periodic_measures} is an improvement of their result.
Note that Theorem 10 in \cite{GarLop07} was suggested by the fact that a circle homeomorphism with rational rotation number has a periodic point and a result of this kind for symbolic dynamics was studied by Ziemian (see Theorem 4.2 of  \cite{Zie95}).

\end{remark}

\begin{remark}{
Although the  properties of rotation sets are well-studied in \cite{Zie95}, to the authors’ knowledge, that of rotation classes have attracted little attention.
We emphasize that Theorem \ref{density_of_periodic_measures} provides a more detailed description of Theorem 4.2 in  \cite{Zie95} under weaker assumptions. 
}
\end{remark}

\begin{remark}\label{Rmk:periodic}
If a constraint $\varphi\in C(\Omega,\mathbb{Q}^d)$ is locally constant, it is easy to see that the rotation vector of a periodic measure should be rational, i.e., 
\begin{align*}
    \varphi(\mathcal{M}_\sigma^p(\Omega))\subset {\rm Rot}(\varphi)\cap \mathbb{Q}^d.
\end{align*}
Hence in Theorem \ref{density_of_periodic_measures} we need to choose a rotation vector $h$ in ${\rm Rot}(\varphi)\cap \mathbb{Q}^d$.
\end{remark}

%Let $C(\Omega)$ be the space of real-valued continuous functions on $\Omega$ with the supremum norm.
Applying Theorem \ref{density_of_periodic_measures}, we next investigate the property of relative maximizing measure for symbolic dynamics%the symbolic dynamics -> symbolic dynamics
.
Regarding \eqref{beta_constraint} as a functional on $C(\Omega,\mathbb{R})$, we can characterize a relative maximizing measure as a ``tangent" measure.
This characterization allows us to adapt argument in \cite{Mor2010} for our constrained case (See \S\ref{generic} for more details)
and we obtain the following.
%The following generic property is our second main result.
\begin{maintheorem}
\label{generic_zero_entropy}
Let $(\Omega, \sigma)$ be an irreducible subshift of finite type. 
Let $\varphi=(\varphi_1, \ldots, \varphi_d)\in C(\Omega, \mathbb{Q}^d)$ be a locally constant function and $h\in \Int(\Rot(\varphi))\cap \mathbb{Q}^d$.
Then for generic $f\in C(\Omega,\mathbb{R})$ every relative maximizing measure of $f$ with constraint $h\in \Int(\Rot(\varphi))\cap \mathbb{Q}^d$ has zero entropy.
In particular, setting $K(f):=\bigcup_{\mu\in \mathcal{M}_h^\varphi(f)} {\rm supp}\mu $, we have $h_{{\rm top}}(K(f))=0$ for generic $f\in C(\Omega,\mathbb{R})$.
\end{maintheorem}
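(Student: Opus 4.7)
The plan is to adapt Morris's argument \cite{Mor2010} to the constrained setting, with Theorem \ref{density_of_periodic_measures} playing the role of Sigmund's classical density theorem. For each integer $n\geq 1$, set
\begin{equation*}
A_n:=\bigl\{ f\in C(\Omega,\mathbb{R})\,:\,\exists\,\mu\in\mathcal{M}^\varphi_h(f)\text{ with }h(\mu)\geq 1/n\bigr\}.
\end{equation*}
I would show that each $A_n$ is closed and nowhere dense in $C(\Omega,\mathbb{R})$. Since the space is Baire, the complement of $\bigcup_n A_n$ is then residual, and for every $f$ in it every $\mu\in\mathcal{M}^\varphi_h(f)$ has zero entropy.

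Closedness of $A_n$ rests on three standard ingredients: $\beta^\varphi_h$ is $1$-Lipschitz in the supremum norm, $\rv_\varphi$ is weak$^\ast$-continuous, and $\mu\mapsto h(\mu)$ is upper semicontinuous on $\mathcal{M}_\sigma(\Omega)$ because sofic shifts are expansive. Given $f_k\to f$ uniformly with $\mu_k\in\mathcal{M}^\varphi_h(f_k)$ and $h(\mu_k)\geq 1/n$, any weak$^\ast$-accumulation point $\mu$ of $(\mu_k)$ then satisfies $\rv_\varphi(\mu)=h$, $\int f\,d\mu=\beta^\varphi_h(f)$, and $h(\mu)\geq 1/n$, so $f\in A_n$.

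For the empty interior property, fix $f\in C(\Omega,\mathbb{R})$ and $\eta>0$; I would construct $g$ with $\|g-f\|_\infty<\eta$ and $g\notin A_n$. Pick any $\mu^\ast\in\mathcal{M}^\varphi_h(f)$. By Theorem \ref{density_of_periodic_measures} there exists a periodic measure $\nu\in\rv_\varphi^{-1}(h)\cap\mathcal{M}^p_\sigma(\Omega)$ arbitrarily close to $\mu^\ast$ in weak$^\ast$, so that $\alpha:=\beta^\varphi_h(f)-\int f\,d\nu$ can be made as small as required below. Choose $\psi\in C(\Omega,\mathbb{R})$ with $\|\psi\|_\infty<\eta$, $\psi\leq 0$, and $\psi^{-1}(0)=\mathrm{supp}(\nu)$ (for instance $\psi(x)=-\eta\min\{1,d(x,\mathrm{supp}(\nu))/\rho\}$ for some $\rho>0$), and set $g:=f+\psi$. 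For every $\mu\in\mathcal{M}^\varphi_h(g)$ the inequality $\int g\,d\mu\geq\int g\,d\nu=\int f\,d\nu$ rearranges to $\int(-\psi)\,d\mu\leq\int f\,d\mu-\int f\,d\nu\leq\alpha$, so $\mathcal{M}^\varphi_h(g)\subseteq B_\alpha:=\{\mu\in\mathcal{M}_\sigma(\Omega):\int(-\psi)\,d\mu\leq\alpha\}$. Since $-\psi\geq 0$ vanishes exactly on $\mathrm{supp}(\nu)$, the decreasing family $(B_\alpha)_{\alpha>0}$ has intersection $\{\nu\}$; combined with weak$^\ast$-compactness and the upper semicontinuity of entropy at $\nu$ (where $h(\nu)=0<1/n$), this forces $B_\alpha\subseteq\{\mu:h(\mu)<1/n\}$ for all sufficiently small $\alpha$. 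Choosing $\nu$ so that the corresponding $\alpha$ meets this threshold yields $g\notin A_n$.

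The main obstacle is absorbed into Theorem \ref{density_of_periodic_measures}: without density of periodic measures in the prescribed rotation class, one could not approximate $\mu^\ast$ by a zero-entropy measure $\nu$ with $\int f\,d\nu$ close to $\beta^\varphi_h(f)$, and the construction above would break down. For the final assertion $h_{\mathrm{top}}(K(f))=0$, I would intersect the residual set above with the residual set on which $\mathcal{M}^\varphi_h(f)$ is a singleton $\{\mu_f\}$ (Garibaldi--Lopes \cite{GarLop07}, see also Appendix \ref{AppendixB}), so that $K(f)=\mathrm{supp}(\mu_f)$; the conclusion then follows from the Bowen-type variational characterization of topological entropy applied to $K(f)$, together with a further perturbation argument of the same shape controlling all invariant measures supported on $\mathrm{supp}(\mu_f)$.
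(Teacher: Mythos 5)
Your reduction of the theorem to showing that each $A_n$ is closed with empty interior is a sound strategy, and the closedness argument (Lipschitz continuity of $\beta^\varphi_h$, weak\(^*\)-continuity of $\rv_\varphi$, upper semicontinuity of entropy for the expansive map $\sigma$) is correct. The gap is in the empty-interior step, and it is a genuine circularity. For a \emph{fixed} $\psi$ (hence a fixed $\nu$ and $\rho$), compactness and upper semicontinuity of entropy do yield a threshold $\alpha_0=\alpha_0(\nu,\rho,n)>0$ such that $B_\alpha\subseteq\{\mu: h(\mu)<1/n\}$ whenever $\alpha<\alpha_0$. But the $\alpha$ you actually have, namely $\beta^\varphi_h(f)-\int f\,d\nu$, is determined by the same $\nu$: to shrink it you must replace $\nu$ by a better periodic approximation of $\mu^\ast$, which changes $\psi$ and hence the threshold. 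Nothing in your argument shows the threshold is uniform in $\nu$ (and it is not a soft fact: as $\nu\to\mu^\ast$ the support of $\nu$ can become $\delta$-dense, forcing $\rho\ll\delta$, and one would then need a genuinely quantitative estimate, of Misiurewicz/Katok counting type, bounding $h(\mu)$ for all invariant $\mu$ concentrated near a long periodic orbit). So the sentence ``choosing $\nu$ so that the corresponding $\alpha$ meets this threshold'' cannot be executed as written.

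The paper avoids this by making $\nu$ \emph{exactly} relatively maximizing before perturbing: Lemma \ref{tangent} identifies $\mathcal M^\varphi_h(g)$ with the measures tangent to the sublinear functional $\beta^\varphi_h$ at $g$, and Lemma \ref{approx} (Bishop--Phelps) then produces, from any periodic $\nu\in\rv_\varphi^{-1}(h)$ with $\beta^\varphi_h(f)-\int f\,d\nu<\varepsilon$, a function $g$ with $\|f-g\|_\infty<\varepsilon$ and $\nu\in\mathcal M^\varphi_h(g)$; Jenkinson's theorem gives a further arbitrarily small perturbation with $\mathcal M^\varphi_h=\{\nu\}$, and the openness of $\mathcal Z_n$ in $\rv_\varphi^{-1}(h)$ (Corollary \ref{residual_set_of_zero_entropy}) together with Lemma \ref{open1} closes the Baire argument with no quantitative entropy estimate. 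If you insert that Bishop--Phelps step, your own perturbation then works immediately: once $\nu$ is exactly maximizing, adding your $\psi$ forces $\mathcal M^\varphi_h(g+\psi)=\{\nu\}$. Separately, your final paragraph on $h_{\mathrm{top}}(K(f))=0$ is only a sketch: $h(\mu_f)=0$ does not by itself bound $h_{\mathrm{top}}(\operatorname{supp}\mu_f)$, since invariant measures on $\operatorname{supp}\mu_f$ need not lie in $\rv_\varphi^{-1}(h)$, so the ``further perturbation argument'' you allude to would have to be supplied.
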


The remainder of this paper is organized as follows.
In \S\ref{density}, we study the structure of rotation classes. In particular, we clarify our definitions and notations for symbolic dynamics and prove Theorem \ref{density_of_periodic_measures} in \S\ref{symbolic}.
In \S\ref{generic} we will illustrate that a relative maximizing measure is regarded as a tangent measure of \eqref{beta_constraint} and prove Theorem \ref{generic_zero_entropy}.

%%%%%%%%%%%%%%%%%%%%%%%%%%%%%%%%%%%%%%%%%%%%%%%%%%%%%%%%

\section{Structure of rotation classes}
\label{density}
\subsection{Density of convex combinations of periodic measures}
% 	Assume the hypotheses of Theorem \ref{density_of_periodic_measures}.
 	We first investigate the structure of rotation classes
 	for a continuous map $T:X\rightarrow X$ on a compact metric space $X$
 	such that $\mathcal{M}_T^p(X)$ is dense in $\mathcal{M}_T(X)$.
% 	\blue{for a full(sofic) shift}.
 	As mentioned in Remark \ref{Rmk:periodic},
 	a rotation class does not contain a periodic measure in some cases.
	Nevertheless, we have the density of convex combinations of periodic measures in a rotation class for every continuous constraint function and for every rotation vector in the interior of the rotation set.

	%which plays
	%a key role in the proof of Theorem \ref{density_of_periodic_measures}.
	For $\nu\in \mathcal{M}_T(X)$, a finite set $F\subset C(X, \mathbb{R})$ and $\varepsilon>0$ set
\begin{align*}
    U_{(F, \varepsilon)}(\nu):=\left\{\mu\in \mathcal{M}_T(X): \left|\int f d\mu-\int f d\nu\right|<\varepsilon, f\in F\right\}.
\end{align*}
Let
\begin{align*}
    \Delta^d&=\left\{(\lambda_1,\ldots,\lambda_{d})\in (0,1)^{d}: \sum_{i=1}^{d}\lambda_i=1\right\},\\
    \mathcal{N}_T&=\Bigg\{\sum_{i=1}^{d+1} \lambda_i\mu_i:
        \ \mu_i\in \mathcal{M}_T^p(X),\quad
        (\lambda_1,\ldots,\lambda_{d+1})\in\Delta^{d+1},\\ %\lambda_i\in(0,1)\cap\mathbb{Q}, \ \sum_{i=1}^{d+1}\lambda_i=1, \\
       &\qquad\quad
       \dim {\rm span} \{\rv_{\varphi}(\mu_1)-\rv_{\varphi}(\mu_{d+1}), \ldots, \rv_{\varphi}(\mu_{d})-\rv_{\varphi}(\mu_{d+1})\}=d\Bigg\},
\end{align*}
 where ${\rm span}\{h_1, \ldots, h_{d}\}$ is the vector space spanned by $\{h_1, \ldots, h_d\}$.
	We begin with the following proposition.
 \begin{proposition}\label{density_of_rational_convex_combinations}
 Let $T:X\rightarrow X$ be a continuous map on a compact metric space $X$ such that $\mathcal{M}_T^p(X)$ is dense in $\mathcal{M}_T(X)$. 
 Let $\varphi=(\varphi_1, \ldots, \varphi_d)\in C(X, \mathbb{R}^d)$ and $h\in \Int(\Rot(\varphi))$.
Then the set
$\rv_{\varphi}^{-1}(h)\cap\mathcal{N}_T$ is dense in $\rv_{\varphi}^{-1}(h)$.
    % \begin{align}\label{eqn:density_of_rational_convex_combinations}
    %    \closure(\rv_{\varphi}^{-1}(h)\cap\mathcal{N}_T)=\rv_{\varphi}^{-1}(h).
    % \end{align}
\end{proposition}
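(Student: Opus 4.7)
The plan is to show that every basic weak$^*$ neighborhood $U_{(F,\varepsilon)}(\nu)$ of a prescribed $\nu\in\rv_\varphi^{-1}(h)$ contains an element of $\mathcal{N}_T\cap\rv_\varphi^{-1}(h)$. The idea is to represent such an element as a convex combination $(1-t)\mu_0+t\sum_{j\neq i^*}\gamma_j\eta_j$, where $\mu_0\in\mathcal{M}_T^p(X)$ is a periodic measure approximating $\nu$ and carrying weight $1-t$ close to $1$, and $\eta_1,\dots,\eta_{d+1}$ form a fixed ``simplex frame'' of periodic measures around $h$ from which exactly one vertex $\eta_{i^*}$ is discarded. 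The small correction $t$ pulls the rotation vector of $\mu_0$ back onto $h$, and dropping one frame vertex keeps the total number of periodic components equal to $d+1$, as the definition of $\mathcal{N}_T$ demands.

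First I would construct the simplex frame. Using $h\in\Int(\Rot(\varphi))$ and the convexity of $\Rot(\varphi)$, choose $d+1$ invariant measures whose rotation vectors are affinely independent with $h$ in the interior of their convex hull; density of $\mathcal{M}_T^p(X)$ together with the continuity of $\rv_\varphi$ then permits replacing them by periodic $\eta_1,\dots,\eta_{d+1}$ whose rotation vectors $w_1,\dots,w_{d+1}$ still enjoy both properties. Let $\alpha_1,\dots,\alpha_{d+1}>0$ be the unique barycentric coordinates of $h$ in this simplex. Next, enlarging $F$ so that $\varphi_1,\dots,\varphi_d\in F$, density of $\mathcal{M}_T^p(X)$ provides $\mu_0\in\mathcal{M}_T^p(X)\cap U_{(F,\delta)}(\nu)$ for a small $\delta>0$ to be fixed later; then $v_0:=\rv_\varphi(\mu_0)$ is $\delta$-close to $h$, lies in the interior of $\mathrm{conv}\{w_1,\dots,w_{d+1}\}$, and has barycentric coordinates $\beta_i$ close to $\alpha_i$. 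By a generic choice one may assume $v_0\neq h$.

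The decisive step is the \emph{barycentric swap}. Set $i^*\in\arg\min_i\alpha_i/\beta_i$, $t:=1-\alpha_{i^*}/\beta_{i^*}$, and $\gamma_j:=(\alpha_j-(1-t)\beta_j)/t$ for $j\neq i^*$. A direct check yields $t>0$, $\sum_{j\neq i^*}\gamma_j=1$, $\gamma_j>0$ for each $j\neq i^*$ (thanks to the minimality of $i^*$), and the identity
\begin{equation*}
(1-t)v_0+t\sum_{j\neq i^*}\gamma_j w_j=h.
\end{equation*}
Moreover $\{v_0\}\cup\{w_j:j\neq i^*\}$ is affinely independent because the coefficient $\beta_{i^*}$ of $w_{i^*}$ in $v_0=\sum_i\beta_i w_i$ is nonzero, so $v_0$ does not lie in the affine span of the remaining $w_j$. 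Therefore $\mu:=(1-t)\mu_0+t\sum_{j\neq i^*}\gamma_j\eta_j$ belongs to $\mathcal{N}_T\cap\rv_\varphi^{-1}(h)$, and since $t\to 0$ as $\delta\to 0$ the estimate $|\int f\,d\mu-\int f\,d\nu|\leq(1-t)\delta+2t\|f\|_\infty$ for $f\in F$ places $\mu$ inside $U_{(F,\varepsilon)}(\nu)$ once $\delta$ is small.

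The main obstacle is precisely this barycentric swap. The naive correction of $\mu_0$ by the full frame would use $d+2$ periodic components and so fail the $\mathcal{N}_T$-condition, while an arbitrary choice of the discarded vertex generally destroys either affine independence or the positivity of the weights. Identifying the unique right vertex $i^*$ via the ratios $\alpha_i/\beta_i$ is the key geometric observation; minor issues such as a tie in the minimizer or the degenerate case $v_0=h$ are handled by a small perturbation of $\mu_0$ within its weak$^*$ neighborhood.
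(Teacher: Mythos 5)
Your argument is correct in substance, but it follows a genuinely different route from the paper's. The paper keeps the construction symmetric: it perturbs $\nu$ into $d+1$ measures $\nu_i=(1-t)\nu+t\xi_i$ whose rotation vectors form a small simplex with barycenter $h$, approximates \emph{each} $\nu_i$ by a periodic measure $\mu_i$, and then solves for the (automatically positive) barycentric coefficients of $h$ in the simplex spanned by the $\rv_\varphi(\mu_i)$. You instead put almost all the mass on a single periodic measure $\mu_0$ approximating $\nu$ and correct its rotation vector with a fixed frame of periodic measures around $h$, discarding one frame vertex via the barycentric swap so that exactly $d+1$ components remain; the swap itself (choice of $i^*$ minimizing $\alpha_i/\beta_i$, positivity of the $\gamma_j$, affine independence of $\{v_0\}\cup\{w_j\}_{j\neq i^*}$, and $t\to 0$ as $\delta\to 0$) checks out. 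Your version is arguably the more economical geometric argument. What the paper's symmetric version buys, however, is the extra property recorded in Remark \ref{decomposition}: every periodic component $\mu_i$ of the approximating combination itself lies in $U_{(F,\varepsilon)}(\nu)$. That fact is exploited in the proof of Theorem \ref{density_of_periodic_measures} (to guarantee $\mu_i([u])>0$ for all $i$, so that all the periodic words share the subword $u$ and can be concatenated); your frame vertices $\eta_j$ are in general far from $\nu$, so your construction would not feed into that later argument without modification.

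Two small points you wave at but do not fully close: the degenerate case $v_0=h$ and a tie in $\arg\min_i\alpha_i/\beta_i$ both need to be excluded, and ``a small perturbation of $\mu_0$'' only works once one observes that the rotation vectors of periodic measures lying in a fixed weak$^*$ neighborhood of $\nu$ are dense in a neighborhood of $h$ (approximate $(1-s)\nu+s\zeta$ for suitable $\zeta$), so that $v_0$ can be placed off the finitely many codimension-one bad sets. This is true, but it is an additional density argument that should be stated; the paper's construction avoids the issue entirely because its coefficients are produced by an interiority argument rather than by a formula that can vanish.
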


\begin{proof}
Take $\nu\in \rv_{\varphi}^{-1}(h)$ and a open neighborhood $U_\nu$ of $\nu$.
Then there exists a finite set $F\subset C(X, \mathbb{R})$ and $\varepsilon>0$ such that $U_{(F, \varepsilon)}(\nu)\subset U_\nu$.

Since $h\in\Int(\Rot(\varphi))$, there exists $\delta>0$ such that
\begin{align*}
    B_{\sqrt{d}\delta}(h)\subset \Rot(\varphi).
\end{align*}
where $B_{\sqrt{d}\delta}(h)$ is the open ball of radius $\sqrt{d}\delta$ centered at $h$.
Let $h'_i=h+\delta e_i$ for $i=1,\ldots, d+1$ where $\{e_i\}_{i=1}^d$ is the standard basis of $\mathbb{R}^d$ and $e_{d+1}=-\sum_{i=1}^d e_i$.
Then $\{h'_i\}_{i=1}^{d+1}$ forms a simplex with the barycenter $h$.
Let $\xi_i\in\rv_\varphi^{-1}(h'_i)$. 

Fix 
\begin{align*}
    0<t< \frac{\varepsilon}{4\max_{f\in F}\|f\|_\infty}
\end{align*}
and define $\nu_i:=(1-t)\nu+t\xi_i$.
Then for $f\in F$ we have
\begin{align*}
    \left|\int f d\nu_i-\int f d\nu\right|&\leq t\left|\int f d\xi_i-\int f d\nu\right|
    \leq 2t\|f\|_\infty<\frac{\varepsilon}{2}
\end{align*}
and $\nu_i\in U_{(F, \varepsilon/2)}(\nu)$.
By the definition of $\nu_i$ we have
\begin{align*}
    \rv_\varphi(\nu_i)=(1-t)\rv_\varphi(\nu)+t\rv_\varphi(\xi_i)
    =(1-t)h+th'_i=h+t\delta e_i.
\end{align*}
Thus $\frac{1}{d+1}\sum_{i=1}^{d+1}\rv_\varphi(\nu_i)=h$ holds.

Since $\mathcal{M}_T^p(X)$ is dense in $\mathcal{M}_T(X)$, for $r>0$, there exists 
\begin{align*}
    \mu_i\in U_{(F, \varepsilon/2)}(\nu_i) \cap U_{(\{\varphi\}, r/\sqrt{d})}(\nu_i) \cap \mathcal{M}_T^p(X).
\end{align*}
Since $\rv_{\varphi}(\nu_1)-\rv_{\varphi}(\nu_{d+1}),\rv_{\varphi}(\nu_2)-\rv_{\varphi}(\nu_{d+1}),\ldots, \rv_{\varphi}(\nu_d)-\rv_{\varphi}(\nu_{d+1})$ are linearly independent,
by the open property of linearly independence and the continuity of the map $\rv_\varphi$, we see that $\rv_{\varphi}(\mu_1)-\rv_{\varphi}(\mu_{d+1}),\rv_{\varphi}(\mu_2)-\rv_{\varphi}(\mu_{d+1}),\ldots, \rv_{\varphi}(\mu_d)-\rv_{\varphi}(\mu_{d+1})$ are also linearly independent for all sufficiently small $r>0$.
Moreover, we obtain
\begin{align*}
    \left|\left|\frac{1}{d+1}\sum_{i=1}^{d+1}\rv_\varphi(\mu_i)-h\right|\right|_{\mathbb{R}^d}=
    \left|\left|\frac{1}{d+1}\sum_{i=1}^{d+1}\rv_\varphi(\mu_i)-\frac{1}{d+1}\sum_{i=1}^{d+1}\rv_\varphi(\nu_i)\right|\right|_{\mathbb{R}^d}\\
    \le\frac{1}{d+1}\sum_{i=1}^{d+1}\left|\left|\rv_\varphi(\mu_i)-\rv_\varphi(\nu_i)\right|\right|_{\mathbb{R}^d}
    <\frac{d+1}{d+1}r=r,
\end{align*}
where $\|\cdot\|_{\mathbb{R}^d}$ is the standard norm of $\mathbb{R}^d$.
Hence $h$ is contained in the open ball of radius $r$ centered at $\frac{1}{d+1}\sum_{i=1}^{d+1}\rv_\varphi(\mu_i)$.
Taking $r>0$ sufficiently small, we deduce that $h$ is an interior point of the simplex whose vertices are $\rv_\varphi(\mu_1),\ldots, \rv_\varphi(\mu_{d+1})$,
which implies that there exists $(\lambda_1,\ldots,\lambda_{d+1})\in\Delta^{d+1}$ such that
$h=\sum_{i=1}^{d+1} \lambda_i \rv_\varphi(\mu_i)$.

Let $\tilde{\nu}=\sum_{i=1}^{d+1} \lambda_i\mu_i$. Trivially, $\rv_\varphi(\tilde{\nu})=h$ holds.
Then for every $f\in F$ we have
\begin{align*}
    \left|\int f d\tilde{\nu}-\int f d\nu\right|
    &=\left|\sum_{i=1}^{d+1}\lambda_i\int f d\mu_i-\sum_{i=1}^{d+1}\lambda_i\int f d\nu\right|\\
    &\leq \sum_{i=1}^{d+1}\lambda_i \left|\int f d\mu_i-\int f d\nu\right|\\
     &\leq \sum_{i=1}^{d+1}\lambda_i \left(\left|\int f d\mu_i-\int f d\nu_i\right|+\left|\int f d\nu_i-\int f d\nu\right|\right)\\
     &<\sum_{i=1}^{d+1}\lambda_i\left(\frac{\varepsilon}{2}+\frac{\varepsilon}{2}\right)
     =\varepsilon,
\end{align*}
i.e., $\tilde{\nu}=\sum_{i=1}^{d+1} \lambda_i\mu_i\in U_{(F,\varepsilon)}(\nu)$,
and this is precisely the assertion of the proposition.
\end{proof}

\begin{remark}
\label{decomposition}
%Let $\{\nu_i\}_{i=1}^{d+1}$ and $\{\mu_i\}_{i=1}^{d+1}$ as 
In the proof of Proposition \ref{density_of_rational_convex_combinations},
we have $\mu_i\in U_{(F, \varepsilon)}(\nu)$ for every $i\in \{1, \ldots, d+1\}$
since we have $\nu_i\in U_{(F, \varepsilon/2)}(\nu)$ and $\mu_i\in U_{(F,\varepsilon/2)}(\nu_i)$.
We will use this fact in the proof of Theorem \ref{density_of_periodic_measures}.
\end{remark}

\begin{corollary}\label{residual_set_of_zero_entropy}
    Assume the hypotheses of Proposition \ref{density_of_rational_convex_combinations}. Let $H_\mu$ be the entropy map of $T$. Then the set
    \begin{align*}
        \mathcal{Z}=\{\mu\in \rv_{\varphi}^{-1}(h): H_\mu=0\}
    \end{align*} is a residual subset of $\rv_{\varphi}^{-1}(h)$.
\end{corollary}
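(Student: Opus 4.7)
The plan is to realize $\mathcal{Z}$ as a dense $G_\delta$ subset of the compact metric space $\rv_{\varphi}^{-1}(h)$ and then invoke the Baire category theorem. Concretely, for each $n\in\mathbb{N}$ I would introduce the sublevel set
\[
\mathcal{Z}_n := \{\mu\in\rv_{\varphi}^{-1}(h) : H_\mu < 1/n\},
\]
so that $\mathcal{Z}=\bigcap_{n\ge 1}\mathcal{Z}_n$, and then verify that every $\mathcal{Z}_n$ is both open and dense in $\rv_{\varphi}^{-1}(h)$.

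Openness is the step I expect to be the main obstacle, since it hinges on the upper semi-continuity of the entropy map $\mu\mapsto H_\mu$. Upper semi-continuity can fail for arbitrary continuous maps on compact metric spaces, but in the setting where Corollary~\ref{residual_set_of_zero_entropy} is ultimately applied in this paper---irreducible sofic shifts, which are expansive---it is standard, and I would invoke it directly. No structure of $T$ beyond the hypotheses of Proposition~\ref{density_of_rational_convex_combinations} and upper semi-continuity of entropy enters the argument.

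For density I would rely entirely on Proposition~\ref{density_of_rational_convex_combinations}. Given $\nu\in\rv_{\varphi}^{-1}(h)$ and a weak$^{*}$ neighborhood of $\nu$, that proposition produces some $\tilde{\nu}=\sum_{i=1}^{d+1}\lambda_i\mu_i$ in $\rv_{\varphi}^{-1}(h)\cap\mathcal{N}_T$ lying in the neighborhood. Since the Kolmogorov--Sinai entropy is affine on $\mathcal{M}_T(X)$ and every periodic measure has zero entropy, $H_{\tilde{\nu}}=\sum_i \lambda_i H_{\mu_i}=0$, so $\tilde{\nu}\in\mathcal{Z}_n$ for every $n$. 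Hence each $\mathcal{Z}_n$ contains $\rv_{\varphi}^{-1}(h)\cap\mathcal{N}_T$ and is dense. Combined with openness, the Baire category theorem, applied in the compact metric (hence Baire) space $\rv_{\varphi}^{-1}(h)$, yields that $\mathcal{Z}$ is residual.
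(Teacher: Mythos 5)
Your argument is essentially identical to the paper's: the same decomposition $\mathcal{Z}=\bigcap_{n\ge 1}\mathcal{Z}_n$, openness of each $\mathcal{Z}_n$ from upper semi-continuity of the entropy map, and density from the inclusion $\mathcal{N}_T\cap\rv_{\varphi}^{-1}(h)\subset\mathcal{Z}_n$ furnished by Proposition~\ref{density_of_rational_convex_combinations} (the paper, following \cite{DGS}, leaves the affineness-of-entropy step implicit where you spell it out). Your observation that upper semi-continuity is not guaranteed by the stated hypotheses alone is a fair caveat --- the paper invokes it without comment as well --- but it does not change the substance of the argument.
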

\begin{proof}
    The argument is similar to \cite{DGS}[Proposition 22.16, p.223].
    By Proposition \ref{density_of_rational_convex_combinations}, $\mathcal{N}_T\cap \rv_{\varphi}^{-1}(h)$ is dense in $\rv_{\varphi}^{-1}(h)$
    and thus $\mathcal{Z}$ is also.
    Moreover, upper semi-continuity of the entropy map $\mu\mapsto H_\mu$ implies for every $n\geq1$
\begin{align*}
    \mathcal{Z}_n:=\left\{\mu\in \rv_{\varphi}^{-1} (h): 0\leq H_\mu<\frac{1}{n}\right\} \supset \mathcal{N}_T\cap \rv_{\varphi}^{-1}
(h)\end{align*}
is nonempty, open and dense in $\rv_{\varphi}^{-1}(h)$.
Hence $\mathcal{Z}=\bigcap_{n\geq 1}\mathcal{Z}_n$ is a residual set in $\rv_{\varphi}^{-1}(h)$.
\end{proof}

% \begin{remark}
%     \blue{For a continuous dynamical system,..., a similar proof works,...}
%     Moreover, Corollary \ref{residual_set_of_zero_entropy} also holds in this case.

%     In the proof of Proposition \ref{density_of_rational_convex_combinations} and Corollary \ref{residual_set_of_zero_entropy} 
% \end{remark}

\begin{lemma}\label{lem:rational_coefficient}
    Assume the hypotheses of Proposition \ref{density_of_rational_convex_combinations}.
    Let $\mu_1,\ldots,\mu_{d+1}\in\mathcal{M}_T^p(X)$
    such that $\rv_\varphi(\mu_1), \ldots, \rv_\varphi(\mu_{d+1})\in \mathbb{Q}^d$ and
    \begin{align}\label{eqn:dimension}
        \dim {\rm span} \{\rv_{\varphi}(\mu_1)-\rv_{\varphi}(\mu_{d+1}), \ldots, \rv_{\varphi}(\mu_{d})-\rv_{\varphi}(\mu_{d+1})\}=d.
    \end{align}
    Let $(\lambda_1,\ldots,\lambda_{d+1})\in\Delta^{d+1}$
    and $\tilde{\nu}=\sum_{i=1}^{d+1}\lambda_i\mu_i\in\mathcal{M}_T(X)$.
    Then $\lambda_i$ is rational for each $i=1,\ldots, d+1$ if $\rv_\varphi(\tilde{\nu})$ belongs to $\mathbb{Q}^d$.
\end{lemma}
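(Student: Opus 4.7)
The plan is to reduce the statement to a simple linear-algebra fact: a square linear system with a rational coefficient matrix and a rational right-hand side, whose matrix is invertible, has a unique rational solution. The hypothesis on the dimension of the span is exactly what guarantees invertibility, so the conclusion will follow by Cramer's rule once the system is written down.

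First I would record the two scalar-valued identities satisfied by $(\lambda_1,\dots,\lambda_{d+1})$. Writing $r_i := \rv_\varphi(\mu_i) \in \mathbb{Q}^d$ for $i=1,\dots,d+1$ and $r := \rv_\varphi(\tilde\nu) \in \mathbb{Q}^d$, linearity of $\rv_\varphi$ gives
\begin{align*}
\sum_{i=1}^{d+1}\lambda_i = 1, \qquad \sum_{i=1}^{d+1}\lambda_i r_i = r.
\end{align*}
Using the first identity to eliminate $\lambda_{d+1}$ from the second one yields the $d$-dimensional affine system
\begin{align*}
\sum_{i=1}^{d}\lambda_i (r_i - r_{d+1}) = r - r_{d+1}.
\end{align*}

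Next I would invoke the hypothesis \eqref{eqn:dimension}: the $d$ vectors $r_1-r_{d+1},\dots,r_d-r_{d+1}$ span a $d$-dimensional subspace of $\mathbb{R}^d$, hence form a basis of $\mathbb{R}^d$. Therefore the $d\times d$ matrix $A$ whose columns are $r_i - r_{d+1}$ (for $i=1,\dots,d$) is invertible. Since each $r_i \in \mathbb{Q}^d$, the matrix $A$ has rational entries, and the right-hand side $r-r_{d+1}$ is rational. By Cramer's rule, the unique solution $(\lambda_1,\dots,\lambda_d)$ of this system lies in $\mathbb{Q}^d$.

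Finally, from $\lambda_{d+1} = 1 - \sum_{i=1}^d \lambda_i$ we conclude $\lambda_{d+1} \in \mathbb{Q}$ as well, completing the proof. There is essentially no obstacle here: the only subtle point is checking that the linear independence of the differences in $\mathbb{R}^d$ suffices to get uniqueness (and rationality) of the affine coefficients, and this is immediate because $d$ vectors that span $\mathbb{R}^d$ must already be a basis.
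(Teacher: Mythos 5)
Your proof is correct and follows essentially the same route as the paper: both eliminate $\lambda_{d+1}$ via $\sum_i\lambda_i=1$, observe that the $d\times d$ matrix of differences $\rv_\varphi(\mu_i)-\rv_\varphi(\mu_{d+1})$ is invertible with rational entries by \eqref{eqn:dimension}, and solve the resulting rational linear system (the paper writes $\lambda=V^{-1}(\rv_\varphi(\mu_{d+1})-\rv_\varphi(\tilde\nu))$ where you invoke Cramer's rule, which is the same fact). No gaps.
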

\begin{proof}
    Since $\tilde{\nu}=\sum_{i=1}^{d+1}\lambda_i\mu_i$ and $\sum_{i=1}^{d+1}\lambda_i=1$, we have
    \begin{align}\label{eqn:rv}
        \rv_\varphi(\tilde{\nu})=\rv_\varphi(\mu_{d+1})+\sum_{i=1}^d \lambda_i\left(\rv_\varphi(\mu_i)-\rv_\varphi(\mu_{d+1})\right).
    \end{align}
    Let $V$ be a $d\times d$-matrix given by
\begin{align}\label{eqn:V}
    V=\Big(\rv_\varphi(\mu_{d+1})-\rv_\varphi(\mu_{1}), \ldots, \rv_\varphi(\mu_{d+1})-\rv_\varphi(\mu_{d})\Big)
\end{align}
and $\lambda$ be a column vector given by $(\lambda)_i=\lambda_i$ for $i=1,\ldots, d$.
It follows from \eqref{eqn:dimension} that the matrix $V$ is invertible.
Therefore, by \eqref{eqn:rv}, we obtain
\begin{align}\label{eqn:lambda}
    \lambda=V^{-1}\left(\rv_\varphi(\mu_{d+1})-\rv_\varphi(\tilde{\nu})\right).
\end{align}
Since $\rv_\varphi(\mu_1), \ldots, \rv_\varphi(\mu_{d+1})$ and $\rv_\varphi(\tilde{\nu})$ are in $\mathbb{Q}^d$,
 each component of $V$ and that of its inverse $V^{-1}$ are
rational.
Therefore, by \eqref{eqn:lambda}, we deduce that
$\lambda_i$ is rational for each $i=1,\ldots, d+1$.
\end{proof}

\subsection{Symbolic dynamics and density of periodic measures}
\label{symbolic}
We next consider symbolic dynamics. In this particular case, under some assumptions, we can prove the density of periodic measures in a given rotation class.
Denote by $\mathbb{N}_0$ the set of all non-negative integers.
For a finite set $\mathcal{A}$ we consider the one-sided infinite product $\mathcal{A}^{\mathbb{N}_0}$ equipped with the product topology of the discrete one.%topology -> one

Let $\sigma$ be the shift map on $\mathcal{A}^{\mathbb{N}_0}$ 
 (i.e.,~$(\sigma (\ul{x}))_i= x_{i+1}$ for each $i\in {\mathbb{N}_0}$ 
  and $\ul{x}= ( x_i)_{i\in {\mathbb{N}_0}} \in \mathcal{A}^{\mathbb{N}_0}$). 
When a subset  $\Omega$ of $\mathcal{A}^{\mathbb{N}_0}$ is $\sigma$-invariant and  closed,  
we call it a \textit{subshift}.
Slightly abusing the notation we denote by $\sigma$ the shift map restricted on $\Omega$.

For a subshift $\Omega$, let 
$[u] = \left\{ \ul{x}\in \Omega : u=x_0\cdots x_{n-1}\right\}$ for each $u\in \mathcal{A}^n$, $n\geq 1$
and set
$
\mathcal L(\Omega ) =\left\{ u\in \bigcup _{n\geq 1}\mathcal{A}^n : [u] \neq \emptyset \right\}
$.
We also denote $\mathcal{L}_n(\Omega):=\{u\in\mathcal{L}(\Omega):|u|=n\}$ for $n\ge 1$,
where $|u|$ denotes the length of $u$, i.e., $|u|=n$ if 
$u =u_0\cdots u_{n-1} \in \mathcal{A}^n$.
A word $u\in \mathcal{A}^n$ {\it appears in} $\ul{x}\in \mathcal{A}^{\mathbb{N}_0}$ if there exists $k\geq0$ such that $x_k\cdots x_{k+n-1}=u_{0}\cdots u_{n-1}$.
For $u,v\in\mathcal{L}(\Omega)$, we use the juxtaposition $uv$ to denote the word obtained by the concatenation and $u^\infty$ means a one-sided infinite sequence $uuu\cdots \in \mathcal{A}^{\mathbb{N}_0}$.
We say that $\Omega$ is \textit{irreducible} if for any $i,j\in A$,
we can find $u\in\mathcal{L}(\Omega)$ such that $iuj\in\mathcal{L}(\Omega)$ holds.
% For each $u\in \mathcal{L}(\Omega)$ define its %it->its
% follower set $F_\Omega(u)$ by 
% $F_\Omega(u)=\{v\in \mathcal{L}(\Omega): uv\in \mathcal{L}(\Omega)\}$.
% Note that the follower sets of different words may coincide. 
% A subshift $\Omega$ is said to be {\it sofic} if the set of follower sets is finite:
% $\#\{F_\Omega(u): u\in \mathcal{L}(\Omega)\}<+\infty$.
%\blue{Sofic shifts are subshifts which are factors of subshifts of finite type.}
A subshift $\Omega$ is a {\it subshift of finite type (SFT)} if there exists a finite set $\mathcal{F}\subset \bigcup_{n\geq 1}\mathcal{A}^n$ such that no word from $\mathcal{F}$ appears in any $\underline{x}\in \Omega$.
The set $\mathcal{F}$ is called a forbidden set of $\Omega$.
Note that different forbidden sets may define the same subshift of finite type.
%We refer to \cite{LM} and references therein for additional details on sofic shifts.
% In this paper we always assume a subshift of finite type is irreducible.
% We should remak that an irreducible sofic shift satisfies the specification property and $\mathcal{M}_\sigma^p(\Omega)$ is dense in $\mathcal{M}_\sigma (\Omega)$ (see for example \cite{Weiss73} and \cite{Sigmund}).
% It is easy %clear -> easy
% to see that the finiteness of the set of the follower sets yields the following:
% % By the definition of sofic shifts we have the following:
% \begin{lemma}
% \label{finite_follower}
% Let $(\Omega,\sigma)$ be a sofic shift. 
% Then there exists $\kappa\geq 1$ such that for every $u\in \mathcal{L}_\kappa(\Omega)$ and $v\in \mathcal{L}(\Omega)$, we have $F_\Omega(vu)=F_\Omega(u)$ provided $u\in F_\Omega(v)$.
% \end{lemma}

In order to prove Theorem \ref{density_of_periodic_measures} we shall show that on a subsfhit of finite type periodic orbits which share the same word can be concatenated without extra gap words.
% which is a key in our proof of Theorem \ref{density_of_periodic_measures}.
\begin{lemma}
\label{concatenation}
Let $(\Omega,\sigma)$ be a subshift of finite type with a forbidden set $\mathcal{F}$. Let $\kappa=\max\{|u|: u \in \mathcal{F}\}$.
Let $u\in \mathcal{L}_{\kappa}(\Omega)$ and $v,w\in \mathcal{L}(\Omega)$ such that $(vu)^\infty, (wu)^\infty\in \Omega$.
Then for every $k\geq1$ and sequences  $\{m_i\}_{i=1}^k, \{n_i\}_{i=1}^k\subset \mathbb{N}$, we have
\begin{align*}
    ((vu)^{m_1}(wu)^{n_1}(vu)^{m_2}\cdots (vu)^{m_k}(wu)^{n_k})^\infty \in \Omega.
\end{align*}
\end{lemma}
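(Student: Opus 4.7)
The plan is to exploit Lemma \ref{finite_follower} to show that, after any admissible word ending in the block $u$, one can freely concatenate either $vu$ or $wu$. Since $|u|=\kappa$, whenever $z\in\mathcal{L}(\Omega)$ can be written as $z=z'u$ with $u\in F_\Omega(z')$, Lemma \ref{finite_follower} gives $F_\Omega(z)=F_\Omega(u)$. Before iterating, I first verify the two key memberships $vu,wu\in F_\Omega(u)$: shifting $(vu)^\infty\in\Omega$ by $|v|$ positions shows $uvu\in\mathcal{L}(\Omega)$, hence $vu\in F_\Omega(u)$, and the same reasoning applied to $(wu)^\infty$ gives $wu\in F_\Omega(u)$.

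Next I would prove by induction on the length that every finite prefix of the candidate sequence $X:=((vu)^{m_1}(wu)^{n_1}\cdots(vu)^{m_k}(wu)^{n_k})^\infty$ lies in $\mathcal{L}(\Omega)$, maintaining the invariant that the prefix ends with the block $u$. The base case is $vu\in\mathcal{L}(\Omega)$, which is immediate from $(vu)^\infty\in\Omega$. For the inductive step, given an admissible prefix $z$ ending with $u$, the identity $F_\Omega(z)=F_\Omega(u)$ together with $vu,wu\in F_\Omega(u)$ allows me to append the next block of $X$ (either $vu$ or $wu$); the extended prefix again ends with $u$, preserving the invariant.

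To conclude that $X\in\Omega$, I would use that the subshift $\Omega$ is determined by its language: every finite subword of $X$ is a subword of some finite prefix of $X$, which the induction has placed in $\mathcal{L}(\Omega)$, so $X\in\Omega$. The only substantive step is the verification that $vu,wu\in F_\Omega(u)$, which is where the hypothesis $(vu)^\infty,(wu)^\infty\in\Omega$ truly enters; the rest is bookkeeping with Lemma \ref{finite_follower}. I do not anticipate a serious obstacle beyond keeping track of the fact that the trailing block of length exactly $\kappa$ in every intermediate prefix is the word $u$ itself, which holds because $|u|=\kappa$ and both $vu$ and $wu$ end with $u$.
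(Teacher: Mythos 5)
Your proposal is correct and follows essentially the same route as the paper: both arguments use Lemma \ref{finite_follower} to show that any admissible word ending in the length-$\kappa$ block $u$ has follower set $F_\Omega(u)$, observe that the $vu$- and $wu$-blocks lie in $F_\Omega(u)$, and then iterate the concatenation before passing to the periodic point via closedness of $\Omega$. The only cosmetic difference is that you append one block at a time with an explicit invariant, whereas the paper appends whole powers $(vu)^{m_i}$, $(wu)^{n_i}$ at each step.
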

\begin{proof}
Let $k\geq1$, $\{m_i\}_{i=1}^k$ and $\{n_i\}_{i=1}^k\subset \mathbb{N}$.
By the definition of $\kappa$ we should only check the words in $((vu)^{m_1}(wu)^{n_1}(vu)^{m_2}\cdots (vu)^{m_k}(wu)^{n_k})^\infty$ with length less than $\kappa$.
However such a word is a subword of 
$vu, uv, wu$ and $uw$.
Since $(vu)^\infty, (wu)^\infty \in \Omega$, there is no forbidden word in $vu, uv, wu$ and $uw$, which complete the proof.
% we see that $u\in F_\Omega((vu)^{m_1-1}v)$.
% By Lemma \ref{finite_follower}, $F_\Omega((vu)^{m_1})=F_\Omega(u)$ holds.
% Moreover, it follows from $(wu)^\infty \in \Omega$ that 
% \begin{align*}
%     (wu)^{n_1}\in F(u)=F((vu)^{m_1}),
% \end{align*}
% which implies $(vu)^{m_1}(wu)^{n_1}\in \mathcal{L}(\Omega)$.
% We also have $F((vu)^{m_1}(wu)^{n_1})=F(u)$ by Lemma \ref{finite_follower}.
% Similarly, we obtain
% \begin{align*}
%     (vu)^{m_2}(wu)^{n_2}\in F(u)=F((vu)^{m_1}(wu)^{n_1})
% \end{align*}
% and $(vu)^{m_1}(wu)^{n_1}(vu)^{m_2}(wu)^{n_2}\in \mathcal{L}(\Omega)$.
% Repeating this argument,
% %\begin{align*}
% % \red{(vu)^{m_1}(wu)^{n_1}(vu)^{m_2}\cdots (vu)^{m_k}(wu)^{n_k}}\in \mathcal{L}(\Omega).
% %\end{align*}
% %Hence we have 
% %\begin{align*}
% %    (vu)^{m_1}(wu)^{n_1}(vu)^{m_2}\cdots (vu)^{m_k}(wu)^{n_k}\in F((vu)^{m_1}(wu)^{n_1}(vu)^{m_2}\cdots (vu)^{m_k}(wu)^{n_k})
% %\end{align*}
% %and by induction we have
% %\begin{align*}
% %    (vu)^{m_1}(wu)^{n_1}(vu)^{m_2}\cdots (vu)^{m_k}(wu)^{n_k}\in F(((vu)^{m_1}(wu)^{n_1}(vu)^{m_2}\cdots (vu)^{m_k}(wu)^{n_k})^\ell), 
% %\end{align*}
% \begin{align*}
%     ((vu)^{m_1}(wu)^{n_1}(vu)^{m_2}\cdots (vu)^{m_k}(wu)^{n_k})^l\in \mathcal{L}(\Omega)
% \end{align*}
% holds for every $\ell\geq1$, which implies
% \begin{align*}
% ((vu)^{m_1}(wu)^{n_1}(vu)^{m_2}\cdots (vu)^{m_k}(wu)^{n_k})^\infty\in \Omega.
% \end{align*}
\end{proof}

In addition, an irreducible sofic shift satisfies the specification property and thus $\mathcal{M}_\sigma^p(\Omega)$ is dense in $\mathcal{M}_\sigma (\Omega)$ (see for example \cite{Weiss73} and \cite{Sigmund}).
Hence we have the following by Proposition \ref{density_of_rational_convex_combinations}.

\begin{corollary}\label{cor:density_of_rational_convex_combinations}
Let $(\Omega, \sigma)$ be an irreducible subshift of finite type with a forbidden set $\mathcal{F}$.
Let $\varphi=(\varphi_1, \ldots, \varphi_d)\in C(\Omega, \mathbb{R}^d)$ be a %$\mathbb{Q}^d$-valued 
locally constant function and $h\in \Int(\Rot(\varphi))$. 
Then the set $\rv_\varphi^{-1}(h)\cap\mathcal{N}_\sigma$ is dense in $\rv_\varphi^{-1}(h)$.
\end{corollary}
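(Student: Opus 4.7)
The plan is to verify the hypotheses of Proposition \ref{density_of_rational_convex_combinations} in the symbolic setting and apply it verbatim. The only nontrivial hypothesis is density of periodic measures, i.e.\ $\mathcal{M}_\sigma^p(\Omega)$ dense in $\mathcal{M}_\sigma(\Omega)$; the other hypotheses are already assumed in the statement of the corollary ($\varphi\in C(\Omega,\mathbb{R}^d)$ being locally constant is in particular continuous, and $h\in\Int(\Rot(\varphi))$ is given).

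To establish density of periodic measures, I would invoke two classical facts from the symbolic and topological ergodic theory literature: first, every irreducible sofic shift satisfies Bowen's specification property, which is a standard consequence of the finiteness of the set of follower sets together with irreducibility (see \cite{Weiss73}); second, Sigmund's theorem asserts that any continuous system with the specification property has the set of periodic measures dense in the set of invariant probability measures (see \cite{Sigmund}). Chaining these two facts yields $\mathcal{M}_\sigma^p(\Omega)$ dense in $\mathcal{M}_\sigma(\Omega)$, which is precisely the remaining hypothesis of Proposition \ref{density_of_rational_convex_combinations}.

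Having verified all hypotheses, the corollary follows by a direct appeal to Proposition \ref{density_of_rational_convex_combinations}. No further argument is needed. In particular, the local constancy of $\varphi$ plays no role here; it is imposed only to set the stage for Theorem \ref{density_of_periodic_measures}, where one must control rotation vectors of periodic orbits and invoke rationality via Lemma \ref{lem:rational_coefficient}.

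There is no genuine obstacle in this corollary; it is essentially a repackaging of Proposition \ref{density_of_rational_convex_combinations} adapted to the symbolic setting. The real work lies ahead, in promoting the density of convex combinations of periodic measures into the density of honest single periodic measures within the rotation class $\rv_\varphi^{-1}(h)$ (Theorem \ref{density_of_periodic_measures}), which will require the concatenation machinery provided by Lemmas \ref{finite_follower} and \ref{concatenation} and the rationality control from Lemma \ref{lem:rational_coefficient}.
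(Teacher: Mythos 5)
Your proof is correct and follows exactly the paper's own route: the paper likewise derives density of $\mathcal{M}_\sigma^p(\Omega)$ in $\mathcal{M}_\sigma(\Omega)$ from the specification property of irreducible sofic shifts together with Sigmund's theorem, and then applies Proposition \ref{density_of_rational_convex_combinations} directly. Your side observation that local constancy of $\varphi$ is not needed here is also consistent with the paper, which only assumes continuity in the proposition.
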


Using Lemma \ref{concatenation} and Corollary \ref{cor:density_of_rational_convex_combinations}, we can prove Theorem \ref{density_of_periodic_measures}.
\begin{proof}[Proof of Theorem \ref{density_of_periodic_measures}]
    Take $\nu\in \rv_{\varphi}^{-1}(h)$ and a open neighborhood $U_\nu$ of $\nu$.
There exists a finite set $F\subset C(\Omega, \mathbb{R})$ and $\varepsilon>0$ such that $U_{(F, \varepsilon)}(\nu)\subset U_\nu$.
Without loss of generality we may assume every $f\in F$ is locally constant.
Let $\kappa=\max\{|u|: u \in \mathcal{F}\}$
and $u=u_0\cdots u_{\kappa-1}\in \mathcal{L}_\kappa(\Omega)$ such that $\nu([u])>0$.
Replace $F$ and $\varepsilon$ with $F\cup\{\chi_{[u]}\}$ and $\min\{\varepsilon, \nu([u])/2\}$ respectively,
where $\chi_{[u]}$ is the characteristic function of $[u]$.

By Corollary \ref{cor:density_of_rational_convex_combinations},
there is $\tilde{\nu}\in U_{(F,\varepsilon/2)}(\nu)\cap\rv_\varphi^{-1}(h)$ of the form $\tilde{\nu}=\sum_{i=1}^{d+1} \lambda_i\mu_i$
where $\mu_i\in\mathcal{M}_\sigma^p(\Omega)$ and $(\lambda_1,\ldots,\lambda_{d+1})\in\Delta^{d+1}$ with \eqref{eqn:dimension}.
Note that $\rv_{\varphi}(\mu_i)\in \mathbb{Q}^d\ (i=1,\ldots,d+1)$ as stated in Remark \ref{Rmk:periodic}.
For each $i=1,\ldots,d+1$, we will denote by $a_i^\infty$ the corresponding periodic orbits to $\mu_i\in\mathcal{M}_\sigma^p(\Omega)$ where $a_i$ is the word with lengths of periods.
Moreover, by Lemma \ref{lem:rational_coefficient}, each $\lambda_i$ can be written as $\lambda_i=q_i/Q$ where $q_1,\ldots, q_{d+1},Q \in \mathbb{N}$
with $q_1+\ldots+q_{d+1}=Q$ since $(\lambda_1,\ldots,\lambda_{d+1})\in\Delta^{d+1}\cap\mathbb{Q}^{d+1}$.

Let $\ell$ be the maximum length of the words on which elements of $F\cup\{\varphi_1, \ldots, \varphi_d\}$ depend.
We can assume
\begin{align*}
   \ell<\min\{|a_1|,\ldots, |a_{d+1}|\}
\end{align*}
%by retaking $\ell$ larger than $\max\{|w|, |w'|\}$ and 
by replacing $a_i\ (i=1,\ldots,d+1)$ with concatenations $a_i^k$ for some $k\in \mathbb{N}$ if necessary.
As stated in Remark \ref{decomposition}, we have $\mu_i\in U_{(F,\varepsilon)}(\nu)$ for every $i=1, \ldots, d+1$, 
which implies $\mu_i([u])>\nu([u])-\varepsilon\geq \nu([u])/2>0$.
Hence $u$ is a subword of each $a_i$
%$a_i$ includes $u$ as a subword of $a_i$
 and without loss of generality we may assume $a_{i, 0}\cdots a_{i, |u| -1}=u$.
For $g\in F\cup\{\varphi\}$ define $\delta_{g,k}\ (k=1,\ldots,d+1)$ by
\begin{align*}
    \delta_{g,k}=
    \sum_{i=0}^{\ell-1}
    \Big\{&g(\sigma^{|a_k|-\ell+i}(a_k a_{k+1}))
    %+g(\sigma^{|a_{k+1}|-\ell+i}(a_{k+1} a_k))\\&\qquad 
    -g(\sigma^{|a_k|-\ell+i}(a_k a_k))
    %-g(\sigma^{|a_{k+1}|-\ell+i}(a_{k+1} a_{k+1}))
    \Big\}
\end{align*}
where $a_{d+2}=a_1$.
Set $\delta_{g}=\delta_{g,1}+\ldots+\delta_{g,d+1}$.

Let $V$ be a $d\times d$-matrix given by \eqref{eqn:V}.
As stated in the proof of Lemma \ref{lem:rational_coefficient}, the matrix $V$ is invertible by \eqref{eqn:dimension}.
Since each component of $V$ and $\delta_{\varphi}$ is rational,
we denote $V^{-1}\delta_{\varphi}=v/R$ where $v\in \mathbb{Z}^d$ and $R\in \mathbb{N}$. Let $v_i=(v)_i\ (i=1,\ldots, d)$.

Now we construct a periodic measure near $\nu$ with the rotation vector $h$.
Since $a_1, \ldots, a_{d+1}$ share the same word $u$, by Lemma \ref{concatenation}, we can concatenate them without extra gap words.
Hence let
\begin{align*}
    A&=|a_1|\ldots|a_{d+1}|,\quad 
    C=\max_{f\in F,\ i=1,\ldots,d+1} \left|\int f \ d \mu_i\right|,\quad \delta^*=\max_{f\in F} |\delta_f|,\\
    m_i'&=Av_i/|a_i|\ (i=1,\ldots,d),\quad
    m_{d+1}'=-A(\sum_{i=1}^d v_i)/|a_{d+1}|,\\
    M_j'&=Aq_j/|a_j|,\quad
    M_j=tARM_j'+m_j'\  (j=1,\ldots, d+1),\\
    y&=a_1^{tM_1'+m_1'}a_2^{tM_2'+m_2'}\cdots a_{d+1}^{tM_{d+1}'+m_{d+1}'},\\
    z&=a_1^{tM_1'}a_2^{tM_2'}\cdots a_{d+1}^{tM_{d+1}'},\quad
    x=yz^{(AR-1)},
\end{align*}
where $t\in \N$ is large enough to satisfy
\begin{align}\label{ineq_t}
    \frac{AR}{|x|}\delta^*+C\sum_{i=1}^{d+1}\left|\frac{M_i|a_i|}{|x|}-\lambda_i\right|<\frac{\varepsilon}{2}
\end{align}
and $tM_{d+1}'-A(\sum_{i=1}^d v_i)/|a_{d+1}|>0$.
Note that such $t\in\N$ exists since
\begin{align*}
    |x|&=\sum_{i=1}^{d+1} M_i|a_i|=tAR\sum_{i=1}^{d+1} M_i'|a_i|,\\
    \lambda_i&=\frac{q_i}{\sum_{j=1}^{d+1}q_j}=\frac{M_i'|a_i|}{\sum_{j=1}^{d+1} M_j'|a_j|}
\end{align*}
hold and the left hand side of \eqref{ineq_t} tends to $0$ as $t\to +\infty$.
Let $\mu$ be the periodic measure supported on $x^\infty$.

First we check $\rv_{\varphi} (\mu)=h (=\rv_{\varphi}(\tilde{\nu}))$.
Since $\varphi$ is locally constant,
\begin{align*}
    \rv_{\varphi}(\mu)&=\frac{1}{|x|}S_{|x|}\varphi(x)\\
    &=\frac{1}{|x|}\left(AR\delta_{\varphi}+\sum_{i=1}^{d+1} M_i|a_i|\int \varphi d\mu_i\right)\\
    &=\frac{1}{|x|}AR\delta_{\varphi}+\frac{1}{|x|}\sum_{i=1}^{d+1} m_i'|a_i|\rv_{\varphi}(\mu_i)+\sum_{i=1}^{d+1}\lambda_i\rv_{\varphi}(\mu_i)\\
    &=h+\frac{A}{|x|}Vv+\frac{1}{|x|}\sum_{i=1}^{d+1} m_i'|a_i|\rv_{\varphi}(\mu_i)\\
    &=h+\frac{A}{|x|}\left\{Vv+\sum_{i=1}^{d}v_i\rv_{\varphi}(\mu_i)-(\sum_{j=1}^d v_j)\rv_{\varphi}(\mu_{d+1})\right\}
    =h.
\end{align*}

Next we check $\mu\in U_{(F, \varepsilon)}(\nu)$.
Let $f\in F$. We compute
\begin{align*}
    &\left|\frac{1}{|x|}S_{|x|}f-\int f\ d\nu\right|\\
    &\leq \left|\frac{1}{|x|}S_{|x|}f-\int f\             d\tilde{\nu}\right|+\left|\int f\ d\tilde{\nu}-\int f\ d\nu\right|\\
    &=\left|\frac{1}{|x|}\Big(AR\delta_{f}+\sum_{i=1}^{d+1} M_i|a_i|\int f d\mu_i\Big)-\sum_{j=1}^{d+1} \lambda_i\int f d\mu_i\right|+\frac{\varepsilon}{2}\\
    &\leq \sum_{i=1}^{d+1}\left|\left(\frac{M_i|a_i|}{|x|}-\lambda_i\right)\int f d\mu_i\right|
    +\frac{AR}{|x|}|\delta_f|+\frac{\varepsilon}{2}\\
    &< \frac{\varepsilon}{2}+\frac{\varepsilon}{2}= \varepsilon,
\end{align*}
which completes the proof.
\end{proof}

\begin{remark}
In the proof of Theorem \ref{density_of_periodic_measures},
we think the word $y$ as a corrective one to attain the desired rotation vector.
A similar approach is used in Theorem 5 of \cite{Jen01rotation} in a different setting
but our construction is more explicit than it.
\end{remark}

\begin{remark}
Note that the error term $\delta_\varphi$ does not depend on $m'_i\ (i=1,\ldots, d+1)$ in our case. For a subshift with the specification condition, we can concatenate the words $a_1^{m'_1}, \ldots,a_{d+1}^{m'_{d+1}}$ with some gap words but the error term in such case depends on $m'_i\ (i=1,\ldots, d+1)$,
which implies we cannot choose a suitable corrective word $y$ for the error term $\delta_\varphi$ in such case.
So we have to overcome this difficulty to extend Theorem \ref{density_of_periodic_measures} to the case of a subshift with the specification condition.
\end{remark}

\begin{remark}
For a rotation vector in the boundary of a rotation set, there may exist no periodic measure in the rotation class.
% There exists a counter example if we choose a rotation vector $h$ in the boundary of a rotation set.
Let $\Omega\subset\{1,2,3\}^{\mathbb{N}_0}$ be a Markov shift with an adjacency matrix 
\begin{align*}
    A=\begin{pmatrix}1&1&0\\1& 1 &1\\0&1&1\end{pmatrix}
\end{align*}
(i.e., $\Omega=\{\ul{x}\in \{1,2,3\}^{\mathbb{N}_0}: A_{x_i x_{i+1}}=1\ \mbox{for all} \ i\in \mathbb{N}_0\}$) and define $\varphi=(\varphi_1,\varphi_2,\varphi_3):\Omega \rightarrow\mathbb{Q}^3$ by
\begin{align*}
    \varphi_i(\ul{x})=\left\{\begin{array}{cc}
        1 & x_0=i \\
        0 & \mbox{else.}
    \end{array}
    \right.
\end{align*}
Then its rotation set $\Rot(\varphi)$ is the polyhedron whose extremal points are 
$e_1, e_2$ and $e_3$
%\red{$e_1, e_3, \frac{1}{2}(e_1+e_2)$ and $\frac{1}{2}(e_2+e_3)$
, where $\{e_1,e_2,e_3\}$ is the standard basis of $\mathbb{R}^3$.
% \begin{align*}
%     \Rot(\varphi)=\left\{(\lambda_1,\lambda_2, \lambda_3)\in \mathbb{R}^3: (\lambda_1, \lambda_2,\lambda_3)\in[0,1]^3, \sum_{i=1}^3\lambda_i=1\right\}.
% \end{align*}
Take a rotation vector $h$ from the open side whose vertices are $e_1$ and $e_3$, i.e., $h\in \{t e_1+(1-t)e_3: t\in (0,1)\}$.
If there exists a periodic measure $\mu\in \rv_\varphi^{-1}(h)$, the corresponding periodic orbit $u^\infty$ should contain both of $1$ and $3$.
Since there is no sequence including $13$ and $31$ in $\Omega$, the word $u$ must contain the symbol $2$.
Hence we have $\rv_{\varphi_2}(\mu)>0$ and $h=\rv_\varphi(\mu)\notin \{t e_1+(1-t)e_3: t\in (0,1)\}$, which is a contradiction.
% We should mention the rotation vectors in the boundary of a rotation set.
% For a locally constant function $\varphi:\Omega\rightarrow\mathbb{R}$, its ``Aubry set" is a subshift of finite type $X$ (which may not be irreducible): 
% \begin{align*}
%     {\rm supp}\mu \subset X \Longleftrightarrow \int \varphi\ d\mu=\beta(\varphi).
%     \label{aubry}
% \end{align*}
% %式番号が以降の議論で使われていないのでalign*にしました
% (See for the proof \cite{Jen01}[Theorem 7], \cite{Bou01}[Proposition 10], \cite{Lep14}[S3.2.2] and \cite{Cha11}.)
% %We call an invariant compact set satisfying \eqref{aubry} is the Aubry set of $\varphi$.
% It is known that periodic measures are dense in invariant measures for irreducible subshift of finite type.
% Combining these facts, every measure in $\rv_{\varphi}^{-1}(\beta(\varphi))$ supported on an irreducible component of the Aubry set is approximated by a periodic measure. 
% The same thing holds for the minimum $-\beta(-\varphi)$.
% % we have for $h\in {\rm Rot}(\varphi)\setminus {\rm int}({\rm Rot}(\varphi))=\{\beta(\varphi), -\beta(-\varphi)\}$ every measure in $\rv_{\varphi}^{-1}h$ is approximated by a periodic measure. 
% % $h=\beta(\varphi)$ or $-\beta(-\varphi)$ and the Aubry set of $\varphi$ is irreducible,
% % then every measure in $\rv_{\varphi}^{-1}h$ is approximated by a periodic measure. 
% \blue{Higher dimension?}
\end{remark}

%%%%%%%%%%%%%%%%%%%%%%%%%%%%%%%%%
\section{Generic property for constraint ergodic optimization}
\label{generic}
In this section, we prove Theorem \ref{generic_zero_entropy}. Our proof is based on the approach presented by Morris \cite{Mor2010} for the unconstrained case but we need to pay careful attention to the constraint. Moreover, density of periodic measures in the rotation class (i.e., Theorem \ref{density_of_periodic_measures}) plays an important role to obtain the argument.

\subsection{Characterization by tangency}
We turn to a general dynamical system in this subsection.
%In this subsection, we turn to a general dynamical system and state analogous results in \cite{Mor2010} for the constrained case.
Let $T: X\rightarrow X$ be a continuous map on a compact metric space. 
Denote by $\mathcal{M}_T^e(X)$ the set of all ergodic measures on $X$.
By the Riesz representation theorem a Borel probability measure on $X$ can be regarded as a bounded linear functional on $C(X,\mathbb{R})$.
Hence we use the operator norm $\|\mu\|=\sup\{|\mu(f)|: f\in C(X,\mathbb{R})\ \mbox{with}\ \|f\|_\infty=1\}$ for an invariant measure $\mu\in \mathcal{M}_T(X)$.

First we characterize a relative maximizing measures by tangency to \eqref{beta_constraint}.
Let $\varphi\in C(X,\mathbb{R}^d)$ and $h\in {\rm Rot}(\varphi)$. Note that here we do not need to assume that $h\in \Int(\Rot(\varphi))\cap \mathbb{Q}^d$.

\begin{lemma} 
\label{tangent}
$\mu\in \mathcal{M}^\varphi_h (f)$ iff $\mu$ is tangent to $\beta^\varphi_h$ at $f$.
\end{lemma}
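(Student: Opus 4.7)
The forward implication is essentially by definition. If $\mu \in \mathcal{M}^\varphi_h(f)$, then $\mu \in \rv_\varphi^{-1}(h)$ competes in the supremum defining $\beta^\varphi_h(g)$ for every $g \in C(X,\mathbb{R})$, so $\beta^\varphi_h(g) \geq \int g\,d\mu$. Rewriting the right-hand side as $\int f\,d\mu + \int(g-f)\,d\mu = \beta^\varphi_h(f) + \int(g-f)\,d\mu$ yields the subgradient inequality $\beta^\varphi_h(g) - \beta^\varphi_h(f) \geq \int(g-f)\,d\mu$, which is the tangency condition.

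For the converse, given a tangent $\mu \in \mathcal{M}_T(X)$, the plan is to recover both defining conditions of $\mathcal{M}^\varphi_h(f)$, namely the constraint $\rv_\varphi(\mu) = h$ and the optimality $\int f\,d\mu = \beta^\varphi_h(f)$, by testing the tangency inequality against perturbations along which $\beta^\varphi_h$ is exactly affine, forcing the inequality to collapse to an equality. Two families of perturbations suffice. First, every $\nu \in \rv_\varphi^{-1}(h)$ satisfies $\int \varphi_i\,d\nu = h_i$ by definition, so the supremum separates and gives $\beta^\varphi_h(f + t\varphi_i) = \beta^\varphi_h(f) + t h_i$ for all $t \in \mathbb{R}$ and $i = 1,\ldots,d$; substituting $g = f + t\varphi_i$ into the tangency inequality and letting $t$ take each sign pins down $\int \varphi_i\,d\mu = h_i$. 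Second, positive scaling gives $\beta^\varphi_h(0) = 0$ and $\beta^\varphi_h(2f) = 2\beta^\varphi_h(f)$, and substituting $g = 0$ and $g = 2f$ sandwiches $\int f\,d\mu$ between $\beta^\varphi_h(f)$ from both sides.

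I do not foresee a real obstacle: this is the constrained analogue of the standard subdifferential characterization of maximizing measures in (unconstrained) ergodic optimization, and the chosen perturbations are the natural ones that leave the affine constraint set $\rv_\varphi^{-1}(h)$ invariant. The only point deserving care is the book-keeping around the definition of ``tangent,'' i.e.\ ensuring that the ambient assumption $\mu \in \mathcal{M}_T(X)$ is in place, so that invariance and positivity of $\mu$ come for free and the remaining content of the lemma is exactly the two items extracted above.
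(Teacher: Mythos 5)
Your argument is correct as far as it goes, and both directions run on the same convex-duality mechanism as the paper; the differences in test functions are cosmetic (the paper first derives $\int g\,d\mu \le \beta^\varphi_h(g)$ for all $g$ from tangency plus subadditivity of $\beta^\varphi_h$ and then specializes to $g=\pm\varphi_i$, where you perturb by $f+t\varphi_i$ directly; for optimality the paper uses only $g=0$ together with the already-established constraint, where you use $g=0$ and $g=2f$ — both fine). The one substantive divergence is exactly the point you dismiss as book-keeping: the paper does \emph{not} assume $\mu\in\mathcal{M}_T(X)$ in the converse. It starts from an arbitrary tangent (bounded linear) functional and proves it is an invariant probability measure, via the inequality $\int g\,d\mu\le\beta^\varphi_h(g)$: positivity because $g\le 0$ forces $\beta^\varphi_h(g)\le 0$, normalization from $g\equiv\pm 1$, and invariance from $g=v\circ T-v$ since $\beta^\varphi_h$ vanishes on coboundaries. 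This stronger form is what the lemma is actually needed for downstream: in Lemma \ref{approx} the Bishop--Phelps theorem produces a functional $\eta$ that is tangent to $\beta^\varphi_h$ at $g$ but is not a priori an element of $\mathcal{M}_T(X)$, and Lemma \ref{tangent} is invoked to conclude $\eta\in\mathcal{M}^\varphi_h(g)$. With your ambient assumption the lemma would not apply at that step, so you should either drop the assumption and insert the short argument above, or state explicitly that tangency alone already forces membership in $\mathcal{M}_T(X)$.
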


\begin{proof}
Let $\mu\in \mathcal{M}^\varphi_h(f)$.
Then for every $g\in C(X,\mathbb{R})$ we have
\[
\beta^\varphi_h(f+g)-\beta^\varphi_h(f)\geq \int f+g\ d\mu-\int f\ d\mu \geq \int g\ d\mu.
\]

Let $\mu$ be tangent to $\beta^\varphi_h$ at $f$. 
For every $g\in C(X,\mathbb{R})$ we have
\begin{align*}
     \int g\ d\mu &\leq \beta^\varphi_h(f+g)-\beta^\varphi_h(f) \nonumber\\
        &=\beta^\varphi_h(g)+\beta^\varphi_h(f+g)-\beta^\varphi_h(f)-\beta^\varphi_h(g) \nonumber\\
        &\leq \beta^\varphi_h(g). \label{bounded}
\end{align*}
Then we can show that $\mu$ is an invariant probability measure in the same way as unconstrained case (See for example \cite{Shi2018, Bre08}).
We now see that $\mu$ takes the rotation vector $h$.
For $i=1, \ldots, d$ we have
 \begin{align*}
        \int \varphi_i\ d\mu\leq \beta^\varphi_h(\varphi_i)=h_i,
         \quad
         -\int \varphi_i\ d\mu\leq \beta^\varphi_h(-\varphi_i)=-h_i,
    \end{align*}
which yields $\rv_{\varphi}(\mu)=h$.

Finally we check $\int f\ d\mu=\beta^\varphi_h(f)$.
Indeed, 
\begin{align*}
    \int -f\ d\mu \leq \beta^\varphi_h(f-f)-\beta^\varphi_h(f)=-\beta_h^\varphi(f).
\end{align*}
Multiplying $-1$, we have
\begin{align*}
     \int f d\mu\geq \beta^\varphi_h(f).
\end{align*}
\end{proof}

Now Lemma \ref{tangent} allows us to adopt the techniques for unconstrained ergodic optimization.
While the rest of this subsection is similar to \cite{Mor2010}, attention should be paid to the constraint and thus we give proofs of Lemmas \ref{approx}, \ref{dense1}, \ref{dense2} below.
%%Checking that a relative maximizing measure is characterized by tangency of \eqref{beta_constraint},
%we can adopt techniques in \cite{Mor2010}.
\begin{lemma}%[Analogy for \cite{Mor2010} Lemma2.2]
\label{approx}
Let $f\in C(X,\mathbb{R})$ and $\varepsilon>0$. 
If $\nu\in \rv_{\varphi}^{-1}(h)\cap \mathcal{M}_{T}^e(X)$ such that $\beta^\varphi_h(f)-\int f\ d\nu<\varepsilon$, 
then there exists $g\in C(X,\mathbb{R})$ such that $\|f-g\|_\infty <\varepsilon$ and $\nu\in \mathcal{M}^\varphi_h(g)$.
\end{lemma}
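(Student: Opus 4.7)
The plan is to reformulate the desired conclusion via Lemma~\ref{tangent} and then to verify it by combining the Hahn--Banach distance formula for closed convex cones with the ergodic decomposition of invariant measures against $\nu$. Set $\alpha:=\beta^\varphi_h(f)-\int f\,d\nu<\varepsilon$, assuming $\alpha>0$ (else $g=f$ works). Since $\rv_\varphi(\nu)=h$ already holds, Lemma~\ref{tangent} shows that $\nu\in\mathcal{M}^\varphi_h(g)$ is equivalent to $\int g\,d\mu\leq\int g\,d\nu$ for every $\mu$ in the compact convex set $K:=\rv_\varphi^{-1}(h)$, i.e.\ to $g$ belonging to the closed convex cone
\[\mathcal{C}_\nu:=\Bigl\{g\in C(X,\mathbb{R}):\int g\,d(\mu-\nu)\leq 0\text{ for every }\mu\in K\Bigr\}.\]
So it suffices to prove $\mathrm{dist}_{\|\cdot\|_\infty}(f,\mathcal{C}_\nu)<\varepsilon$.

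By construction $\mathcal{C}_\nu$ is the polar of $K-\nu$, so its own polar equals $\overline{\mathrm{cone}}(K-\nu)$ by the bipolar theorem, and the Hahn--Banach distance formula for closed convex cones yields
\[\mathrm{dist}_{\|\cdot\|_\infty}(f,\mathcal{C}_\nu)=\sup\Bigl\{\int f\,d\xi:\xi\in\overline{\mathrm{cone}}(K-\nu),\ \|\xi\|\leq 1\Bigr\},\]
where $\|\cdot\|$ is the total variation norm dual to $\|\cdot\|_\infty$. Writing a cone element as $\xi=t(\mu-\nu)$ with $t>0$ and $\mu\in K$ and optimising over $t$, this supremum reduces to bounding the quotients $(\int f\,d\mu-\int f\,d\nu)/\|\mu-\nu\|$ for $\mu\in K\setminus\{\nu\}$.

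The key input is the ergodicity of $\nu$. For each $\mu\in K$ with $\mu\neq\nu$, the ergodic decomposition splits $\mu=t\nu+(1-t)\mu'$ with $t\in[0,1)$ and $\mu'$ an integral of ergodic measures distinct from $\nu$; since the set of $\nu$-generic points is $\nu$-conull yet $m$-null for every ergodic $m\neq\nu$, we obtain $\mu'\perp\nu$, hence $\|\mu-\nu\|=2(1-t)$. Linearity of $\rv_\varphi$ forces $\mu'\in K$, so $\int f\,d\mu'\leq\beta^\varphi_h(f)=\int f\,d\nu+\alpha$, and
\[\frac{\int f\,d\mu-\int f\,d\nu}{\|\mu-\nu\|}=\frac{(1-t)(\int f\,d\mu'-\int f\,d\nu)}{2(1-t)}\leq\frac{\alpha}{2}<\frac{\varepsilon}{2}.\]
Since weak-$*$ convergent sequences of signed measures are norm-bounded by Banach--Steinhaus, any cluster point in $\overline{\mathrm{cone}}(K-\nu)$ is, after passing to a subsequence, still of the form $r(\mu^\ast-\nu)$ with $\mu^\ast\in K$, to which the same ergodic-decomposition argument applies; the quotient bound $\alpha/2$ thus persists in the bipolar closure, giving $\mathrm{dist}(f,\mathcal{C}_\nu)\leq\alpha/2<\varepsilon$ and the required $g$. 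The step demanding most care is the mutual singularity $\mu'\perp\nu$ in the ergodic decomposition, which rests on the Choquet-type representation of invariant probabilities together with the separating role of the $\nu$-generic set.
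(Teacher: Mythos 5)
Your argument is correct, but it follows a genuinely different route from the paper. The paper applies the Bishop--Phelps theorem to the convex functional $\beta^\varphi_h$ at $f$ with the ``almost tangent'' functional $\nu$, obtains a nearby $g$ together with a tangent measure $\eta\in\mathcal{M}^\varphi_h(g)$, and then transfers tangency from $\eta$ back to $\nu$ via the Lebesgue decomposition, using ergodicity of $\nu$ to force $\hat{\nu}=\nu$ and the linearity of $\rv_\varphi$ to keep the complementary piece $\hat\eta$ inside $\rv_\varphi^{-1}(h)$. You instead recast the conclusion as a distance estimate to the closed convex cone $\mathcal{C}_\nu$ of functions maximized at $\nu$ over $K=\rv_\varphi^{-1}(h)$, compute that distance by the Hahn--Banach formula over the polar cone, and bound the relevant quotients using the ergodic decomposition $\mu=t\nu+(1-t)\mu'$ with $\mu'\perp\nu$ and $\mu'\in K$ --- the same two inputs (ergodicity of $\nu$ and linearity of $\rv_\varphi$) that the paper uses, but deployed to estimate $\|\mu-\nu\|=2(1-t)$ rather than to identify a Lebesgue component. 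Your route avoids Bishop--Phelps entirely and in fact yields the sharper bound $\mathrm{dist}(f,\mathcal{C}_\nu)\le\tfrac12\bigl(\beta^\varphi_h(f)-\int f\,d\nu\bigr)$, a factor of two better than what the statement requires; the paper's route is shorter given the Bishop--Phelps black box and runs exactly parallel to Morris's unconstrained argument. One step of yours needs tightening: identifying $\overline{\mathrm{cone}}(K-\nu)$ with $\{r(\mu-\nu):r\ge0,\ \mu\in K\}$ cannot be done by sequences alone, since the weak-$*$ topology on $C(X,\mathbb{R})^*$ is not metrizable on unbounded sets and a net converging to a point of the unit ball need not be norm-bounded. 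The clean fix is the Krein--\v{S}mulian theorem: the convex cone $C=\{r(\mu-\nu):r\ge0,\ \mu\in K\}$ meets each ball $B_M$ in a weak-$*$ compact set (after reducing $\mu-\nu$ to $(1-t)(\mu'-\nu)$ with $\mu'\perp\nu$, the intersection is the image of the compact set $[0,M/2]\times K$ under a weak-$*$ continuous map intersected with $B_M$), hence $C$ is weak-$*$ closed and equals the bipolar. With that repair, and noting that the initial equivalence $\nu\in\mathcal{M}^\varphi_h(g)\iff g\in\mathcal{C}_\nu$ is just the definition of a relative maximizing measure (Lemma \ref{tangent} is not needed there), the proof is complete.
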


\begin{proof}
Applying Bishop-Phelps's theorem \cite[Theorem V.1.1.]{Israel} to $f$, $\nu$ and $\varepsilon'=1$ we have $g\in C(X,\mathbb{R})$, $\eta\in \rv_{\varphi}^{-1}(h)$ such that  $\eta$ is tangent to $\beta^\varphi_h$ at $g$, $\|\nu-\eta\|<\varepsilon'=1$ and 
\[
\|f-g\|_\infty<\frac{1}{\varepsilon'}\left(\beta^\varphi_h(f)-\int f d\nu\right)<\varepsilon.
\]
By Lemma \ref{tangent} we have $\eta\in \mathcal{M}^\varphi_h(g)$.
If $\eta=\nu$, the proof is complete. 
Let $\eta \neq \nu$.
We can conclude $\nu$ and $\eta$ are not mutually singular by $\|\nu-\eta\|<1$ in the same way as in \cite{Mor2010}[Lemma 2.2]. 
% % We show that $\|\nu-\eta\|<\varepsilon'=1$ yields $\eta=(1-\lambda)\hat{\eta}+\lambda\nu$ for some $\lambda \in (0,1)$ and $\hat{\eta}\in \rv_{\varphi}^{-1}h$.
% (Case 1) $\nu\perp \eta$: This condition yields contradiction to $\|\nu-\eta\|<1$ by a standard argument. (See for example \cite{Mor2010}).
% (Case 2) 
% Otherwise, 
Hence by the Lebesgue decomposition theorem, there exist $\hat{\nu}, \hat{\eta}\in \mathcal{M}$ and $\lambda\in (0,1)$ such that
\[
    \eta=(1-\lambda)\hat{\eta}+\lambda\hat{\nu}
\]
where $\hat{\eta}\perp \nu$ and $\hat{\nu}\ll\nu$.
By a standard argument using the Radon-Nikodym theorem, it is easy to see $\hat{\nu}=\nu$ (See for example \cite{Walters}).
% it is easy to see $\hat{\nu}$ is $\sigma$-invariant and  $\hat{\nu}=\nu$.
% This also implies $\hat{\eta}$ is $\sigma$-invarinat too.
% Since $\nu$ is ergodic, $\hat{\nu}=\nu$.
Then for each $i=1, \ldots, d$
\begin{align*}
    h_i=\int \varphi_i\ d\eta&=(1-\lambda)\int \varphi_i\ d\hat{\eta}+\lambda \int \varphi_i\ d\nu\\
        &=(1-\lambda)\int \varphi_i\ d\hat{\eta}+\lambda h_i.
\end{align*}
Hence we have
\begin{align*}
    (1-\lambda)\left(\int \varphi_i\ d\hat{\eta}-h_i\right)=0
     \quad (i=1, \ldots, m),
\end{align*}
which yields $\hat{\eta}\in \rv_{\varphi}^{-1}(h)$.
Since $\hat{\eta}, \nu\in \rv_{\varphi}^{-1}(h)$, we have
\[
    \int g\ d\hat{\eta}\leq \beta^\varphi_h(g) \quad\mbox{and}\quad \int g\ d\nu\leq \beta^\varphi_h(g).
\]
They should be equality since $\int g\ d\eta =\beta^\varphi_h(g)$. Therefore, we obtain $\nu\in \mathcal{M}^\varphi_h(g)$.
\end{proof}

Set $\mathcal{E}=\rv_{\varphi}^{-1}(h)\cap \mathcal{M}^e_{T}(X)$.
In the reminder of this subsection we assume 
\begin{align}
   \overline{\mathcal{E}}=\rv_{\varphi}^{-1}(h)
   \label{e_dense}
%    {\rm cl}\left(\rv_{\varphi}^{-1}h\cap\mathcal{M}^e_\sigma(X)\right)=\rv_{\varphi}^{-1}h.
\end{align}

%Since proofs of Lemma \ref{open1} and Lemma \ref{open2} are slight modifications of Lemma 3.1 and Lemma 3.2 in \cite{Mor2010} respectively, 
%we omit their proofs.
Since the constraint requires no further discussion
in the next two lemmas, we omit the proofs.
\begin{lemma}%[Analogy for \cite{Mor2010} Lemma 3.1 ]
\label{open1}
Let $\mathcal{U}\subset \rv_{\varphi}^{-1}(h)$ be an open set.
Then 
\[
U:=\{f\in C(X,\mathbb{R}): \mathcal{M}^\varphi_h(f)\subset \mathcal{U}\}
\]
is open in $C(X,\mathbb{R})$.
\end{lemma}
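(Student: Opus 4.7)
The plan is to show that the complement $C(X,\mathbb{R})\setminus U$ is closed by a weak$^*$-compactness argument combined with continuity of the value functional $\beta^\varphi_h$. Suppose $f_n\to f_0$ uniformly with $f_n\notin U$, so for each $n$ one may pick $\mu_n\in \mathcal{M}^\varphi_h(f_n)\setminus \mathcal{U}$. My aim is to produce a cluster point $\mu_\infty$ of $(\mu_n)$ that lies in $\mathcal{M}^\varphi_h(f_0)\setminus \mathcal{U}$, contradicting $\mathcal{M}^\varphi_h(f_0)\subset \mathcal{U}$.

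First I would invoke weak$^*$-compactness of $\mathcal{M}_T(X)$ to extract a subsequence $\mu_{n_k}\to \mu_\infty$. Continuity of $\rv_\varphi$ gives $\mu_\infty\in \rv_\varphi^{-1}(h)$, and since $\mathcal{U}$ is open in the relative topology of $\rv_\varphi^{-1}(h)$ while $\mu_{n_k}\notin \mathcal{U}$ for all $k$, we obtain $\mu_\infty\notin \mathcal{U}$. The core step is then to upgrade this to $\mu_\infty\in \mathcal{M}^\varphi_h(f_0)$. Two ingredients enter: (i) the functional $\beta^\varphi_h$ is $1$-Lipschitz with respect to $\|\cdot\|_\infty$, since it is a supremum of integrals against probability measures, so $\beta^\varphi_h(f_n)\to \beta^\varphi_h(f_0)$; and (ii) the elementary estimate
\[
\left|\int f_n\dd\mu_{n_k}-\int f_0\dd\mu_\infty\right|\leq \|f_{n_k}-f_0\|_\infty+\left|\int f_0\dd\mu_{n_k}-\int f_0\dd\mu_\infty\right|
\]
shows, by uniform convergence of $f_n$ and weak$^*$-convergence of $\mu_{n_k}$, that $\int f_{n_k}\dd\mu_{n_k}\to\int f_0\dd\mu_\infty$. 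Combining with $\int f_{n_k}\dd\mu_{n_k}=\beta^\varphi_h(f_{n_k})$ yields $\int f_0\dd\mu_\infty=\beta^\varphi_h(f_0)$, hence $\mu_\infty\in \mathcal{M}^\varphi_h(f_0)$.

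There is no substantial obstacle beyond these routine continuity and compactness verifications; the statement is essentially upper semicontinuity of the set-valued map $f\mapsto \mathcal{M}^\varphi_h(f)$ with respect to weak$^*$-topology. The only mild point of care is that $\mathcal{U}$ is open relative to $\rv_\varphi^{-1}(h)$ rather than $\mathcal{M}_T(X)$, but continuity of $\rv_\varphi$ keeps the limit inside this subset, so the relative openness suffices.
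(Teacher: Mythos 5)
Your argument is correct and is essentially the argument the paper intends: the authors omit the proof, noting it is a slight modification of Lemma~3.1 of Morris (2010), which is exactly this weak$^*$-compactness / upper-semicontinuity argument for the set-valued map $f\mapsto\mathcal{M}^\varphi_h(f)$, with the only constrained-case addition being the observation that $\rv_\varphi^{-1}(h)\setminus\mathcal{U}$ is weak$^*$-closed so the limit measure stays outside $\mathcal{U}$ and keeps rotation vector $h$. (Only a typographical slip: the left-hand side of your displayed estimate should read $\int f_{n_k}\,d\mu_{n_k}$ rather than $\int f_n\,d\mu_{n_k}$.)
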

% \begin{remark}
% The similar statement holds even if we assume $\mathcal{U}\subset {\rm cl}({\rm ext}(\rv_{\varphi}^{-1}h))$. 
% \end{remark}
%
% \begin{proof}
% Take a convergent sequence $\{f_n\}_{n=1}^\infty \subset C(\Omega)\setminus U$ and denote its limit by $f:=\lim_{n\to\infty}f_n$.
% For each $n\geq1$ there exists $\mathcal{M}^\varphi_h(f_n)\setminus \mathcal{U}$.
% Since $\rv_{\varphi}^{-1}h\setminus \mathcal{U}$ is closed and compact, there exist a convergent subsequence $\{\mu_{n_k}\}$
% and the limit $\mu:=\lim_{k\to\infty}\mu_{n_k}\in \rv_{\varphi}^{-1}h\setminus \mathcal{U}$.

% Finally we show $\mu\in \mathcal{M}^\varphi_h(f)$.
% Let $\nu\in \rv_{\varphi}^{-1}h$. Since $\mu_{n_k}\in \mathcal{M}^\varphi_h(f_n)$, we have
% \begin{align*}
%     \int f\ d\nu&\leq \int f_{n_k}\ d\nu+\|f-f_{n_k}\|\\
%     &\leq \int f_n\ d\mu_{n_k}+\|f-f_{n_k}\|\\
%     &\leq \int f\ d\mu_{n_k}+2\|f-f_{n_k}\|.
% \end{align*}
% Letting $k\to\infty $ we have $\int f\ d\nu \leq \int f\ d\mu$.
% Hence $\mu\in \mathcal{M}^\varphi_h(f)\setminus \mathcal{U}$ and $f\in C(\Omega)\setminus U$.
% \end{proof}

\begin{lemma}%[Analogy for \cite{Mor2010} Lemma 3.2 ]
\label{open2}
Let $U\subset C(X,\mathbb{R})$ be an open set. Then
\[
\mathcal{U}:=\mathcal{E}\cap \bigcup_{f\in U} \mathcal{M}^\varphi_h(f)
\]
is open in $\mathcal{E}$.
\end{lemma}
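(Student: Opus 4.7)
The plan is to reduce the openness of $\mathcal{U}$ in $\mathcal{E}$ to a direct application of Lemma~\ref{approx}, exploiting the weak-$*$ continuity of integration against a fixed $f \in C(X, \mathbb{R})$.

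First I would fix an arbitrary $\nu \in \mathcal{U}$ and pick some $f \in U$ such that $\nu \in \mathcal{M}_h^\varphi(f)$, so in particular $\int f \, d\nu = \beta_h^\varphi(f)$. Since $U$ is open in $C(X, \mathbb{R})$, there exists $\varepsilon > 0$ with $\{g \in C(X, \mathbb{R}) : \|f - g\|_\infty < \varepsilon\} \subset U$. I then define the candidate neighborhood of $\nu$ in $\mathcal{E}$ by
\[
V := \left\{\nu' \in \mathcal{E} : \int f \, d\nu - \int f \, d\nu' < \varepsilon \right\}.
\]
Because the functional $\nu' \mapsto \int f \, d\nu'$ is weak-$*$ continuous and $V$ is specified by a strict inequality, $V$ is open in $\mathcal{E}$ and contains $\nu$ (the inequality reads $0 < \varepsilon$ at $\nu' = \nu$).

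Next, I would verify $V \subset \mathcal{U}$. Let $\nu' \in V$; then $\nu' \in \mathcal{E} = \rv_\varphi^{-1}(h) \cap \mathcal{M}_T^e(X)$ and
\[
\beta_h^\varphi(f) - \int f \, d\nu' = \int f \, d\nu - \int f \, d\nu' < \varepsilon.
\]
Lemma~\ref{approx} then produces some $g \in C(X, \mathbb{R})$ with $\|f - g\|_\infty < \varepsilon$ and $\nu' \in \mathcal{M}_h^\varphi(g)$. By the choice of $\varepsilon$ we have $g \in U$, hence $\nu' \in \bigcup_{g \in U} \mathcal{M}_h^\varphi(g)$, and combined with $\nu' \in \mathcal{E}$ this gives $\nu' \in \mathcal{U}$.

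I do not anticipate a serious obstacle here: the content of the lemma is entirely packaged into Lemma~\ref{approx} (which in turn rests on the Bishop–Phelps theorem and the tangency characterization of relative maximizing measures in Lemma~\ref{tangent}). The only point that merits attention is that the natural neighborhood defined by the weak-$*$ continuous functional $\int f \, d(\cdot)$ must be taken one-sided (controlling $\beta_h^\varphi(f) - \int f \, d\nu'$ from above rather than the absolute difference), which is precisely the hypothesis of Lemma~\ref{approx}.
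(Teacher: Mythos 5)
Your proof is correct and follows essentially the same route as the paper, which omits the argument precisely because it is the standard one from Lemma 3.2 of Morris (2010): take the relatively open set of ergodic measures in the rotation class whose $f$-integral is within $\varepsilon$ of $\beta^\varphi_h(f)$, and feed each such measure into Lemma~\ref{approx} to land in the $\varepsilon$-ball around $f$ inside $U$. The one point you flag — that the neighborhood must be the one-sided condition $\beta^\varphi_h(f)-\int f\,d\nu'<\varepsilon$ matching the hypothesis of Lemma~\ref{approx} — is exactly the right observation, and the hypothesis $\nu'\in\mathcal{E}$ supplies the ergodicity and constraint membership that Lemma~\ref{approx} requires.
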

% \begin{proof}
% Take $\mu\in \mathcal{U}$.
% Then there exists $f\in U$ such that $\mu\in \mathcal{M}^\varphi_h(f)$. 
% Choose $\varepsilon>0$ such that $\|f-g\|_\infty<\varepsilon$ implies $g\in U$.
% Let $\mathcal{V}$ be an open set in $\mathcal{E}$ such that $\beta^\varphi_h(f)-\int f\ d\nu<\varepsilon$ for every $\nu\in \mathcal{V}$: 
% \[
% \mathcal{V}=f_*^{-1}(\beta^\varphi_h(f)-\varepsilon, \infty)\cap \mathcal{E}\ni \mu.
% \]
% %(Note that $\mu\in \mathcal{V}$, $\mathcal{V}\neq \emptyset$.)
% Then for $\nu\in \mathcal{V}$, by Lemma \ref{approx}, we have $g\in C(X)$ such that $\|f-g\|_\infty<\varepsilon$ and $\nu\in \mathcal{M}^\varphi_h(g)$.
% Hence $\mathcal{V}\subset \mathcal{U}$.
% \end{proof}

At the end of this subsection, we give the following two lemmas.
%While proofs of the next two lemmas are also similar to \cite{Mor2010},  attention should be paid to the constraint.
%Hence we give proofs.
\begin{lemma}%[Analogy for \cite{Mor2010} Lemma 3.3 ]
\label{dense1}
Let $\mathcal{U}$ be a dense subset of $\mathcal{E}$. 
Then 
\[
U:=\{f\in C(X,\mathbb{R}): \mathcal{M}^\varphi_h(f)=\{\mu\}\ \mbox{for some}\ \mu\in \mathcal{U}\}
\]
is dense in $C(X,\mathbb{R})$.
\end{lemma}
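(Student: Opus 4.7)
Given $f \in C(X,\mathbb{R})$ and $\varepsilon > 0$, the plan is to construct $g \in C(X,\mathbb{R})$ with $\|f - g\|_\infty < \varepsilon$ and $\mathcal{M}^\varphi_h(g) = \{\mu\}$ for some $\mu \in \mathcal{U}$. I will proceed in two stages: first install some $\mu \in \mathcal{U}$ as a relative maximizer, then force uniqueness by a further small perturbation.

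For the first stage, assumption \eqref{e_dense} together with the hypothesized density of $\mathcal{U}$ in $\mathcal{E}$ gives that $\mathcal{U}$ is dense in $\rv_\varphi^{-1}(h)$. Hence for any prescribed $\delta > 0$ I can choose $\mu \in \mathcal{U}$ with $\beta^\varphi_h(f) - \int f\, d\mu < \delta$. Fixing $\delta < \varepsilon/2$ and applying Lemma \ref{approx} produces $g_1 \in C(X,\mathbb{R})$ with $\|f - g_1\|_\infty < \delta$ and $\mu \in \mathcal{M}^\varphi_h(g_1)$.

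For the second stage, I would seek $\psi \in C(X,\mathbb{R})$ with $\|\psi\|_\infty < \varepsilon/2$ such that $\int \psi\, d\mu > \int \psi\, d\eta$ strictly for every $\eta \in \mathcal{M}^\varphi_h(g_1) \setminus \{\mu\}$. Setting $g := g_1 + \psi$ then yields $\|f - g\|_\infty < \varepsilon$ and $\mathcal{M}^\varphi_h(g) = \{\mu\}$ by strict optimality of $\mu$ for the perturbation. To construct such a $\psi$, observe that $\mathcal{M}^\varphi_h(g_1)$ is weak-$*$ compact, metrizable and separable, and that $\mu$ is extreme in $\mathcal{M}_T(X)$ (hence in $\mathcal{M}^\varphi_h(g_1)$) by ergodicity. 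One then enumerates a countable weak-$*$ dense sequence $(\eta_n)$ in $\mathcal{M}^\varphi_h(g_1)\setminus\{\mu\}$, uses the Hahn-Banach theorem to separate $\mu$ from each $\eta_n$ by some $\psi_n \in C(X,\mathbb{R})$ with $\|\psi_n\|_\infty \leq 1$, and assembles $\psi = \sum_n c_n \psi_n$ with coefficients $c_n > 0$ small enough to keep $\|\psi\|_\infty < \varepsilon/2$.

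The main obstacle will be passing from countable strict separation to strict separation throughout the whole of $\mathcal{M}^\varphi_h(g_1)\setminus\{\mu\}$: weak-$*$ limit points of the separating sequence may collapse the strict gap, so extremality of $\mu$ alone does not automatically upgrade to exposedness by an element of $C(X,\mathbb{R})$. If this direct route fails, the backup plan is to invoke Mazur's Gâteaux differentiability theorem applied to the continuous convex functional $\beta^\varphi_h$ on the separable Banach space $C(X,\mathbb{R})$: this produces $g$ arbitrarily close to $g_1$ with $\mathcal{M}^\varphi_h(g)$ a singleton $\{\nu\}$. Upper semicontinuity of $g \mapsto \mathcal{M}^\varphi_h(g)$ combined with the correspondences in Lemma \ref{open1} and Lemma \ref{open2} should then be enough to confine $\nu$ to an open neighborhood of $\mu$ meeting $\mathcal{U}$, at which point density of $\mathcal{U}$ in $\mathcal{E}$ allows us to arrange $\nu \in \mathcal{U}$.
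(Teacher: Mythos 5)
Your Stage 1 is fine and essentially matches the paper's set-up (the paper reaches the same point via Lemma \ref{open2} rather than Lemma \ref{approx}, but both routes legitimately produce $g_1$ in the given ball around $f$ and $\mu\in\mathcal{U}$ with $\mu\in\mathcal{M}^\varphi_h(g_1)$). The genuine gap is Stage 2, and it is exactly the obstacle you flag yourself. What you need is a function $\psi$ for which $\mu$ is the \emph{unique} maximizer of $\eta\mapsto\int\psi\,d\eta$, i.e.\ that $\mu$ is an exposed point, not merely an extreme point; ergodicity gives extremality in $\mathcal{M}_T(X)$, but extreme points of infinite-dimensional compact convex sets need not be exposed, and your Hahn--Banach sum $\psi=\sum_n c_n\psi_n$ does not repair this: for $\eta$ near some $\eta_n$ the single positive term $c_n\bigl(\int\psi_n\,d\mu-\int\psi_n\,d\eta\bigr)$ can be swamped by the remaining terms, which have no sign. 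The missing ingredient is Jenkinson's realization theorem (\cite{Jen06}, Theorem 1): every ergodic measure $\mu$ is the unique maximizing measure of some $g\in C(X,\mathbb{R})$, i.e.\ $\int g\,d\mu>\int g\,d\eta$ for \emph{every} invariant $\eta\neq\mu$. This is a genuine theorem, not a consequence of soft separation arguments. With it the paper concludes immediately: since $\mu\in\mathcal{M}^\varphi_h(g_1)$ and $\mathcal{M}_{\mathrm{max}}(g)=\{\mu\}$ with $\mu\in\rv_\varphi^{-1}(h)$, one gets $\mathcal{M}^\varphi_h(g_1+\delta g)=\{\mu\}$ for all $\delta>0$, and taking $\delta$ small keeps $g_1+\delta g$ in the prescribed ball.

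Your backup plan via Mazur's theorem does not close the gap either. Generic G\^ateaux differentiability of $\beta^\varphi_h$ gives functions $g$ with $\mathcal{M}^\varphi_h(g)$ a singleton $\{\nu\}$, but there is no mechanism to force $\nu\in\mathcal{U}$: membership in a dense (not open) set is destroyed by arbitrarily small perturbations, so ``$\nu$ close to $\mu$'' buys nothing. Worse, in the constrained setting $\mathcal{M}^\varphi_h(g)$ is a face of $\rv_\varphi^{-1}(h)$ but not of $\mathcal{M}_T(X)$, so the unique $\nu$ need not even be ergodic, hence need not lie in $\mathcal{E}\supset\mathcal{U}$ at all. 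The only workable order of quantifiers is the paper's: first choose the measure $\mu$ from $\mathcal{U}$, then build a function that exposes it, which is precisely what Jenkinson's theorem provides.
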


\begin{proof}
Let $V\subset C(X,\mathbb{R})$ be an open set. Set
\[
\mathcal{V}=\mathcal{E}\cap \bigcup_{f\in V}\mathcal{M}^\varphi_h(f).
\]
By Lemma \ref{open2}, $\mathcal{V}$ is open in $\mathcal{E}$.
Since $\mathcal{U}$ is dense in  $\mathcal{E}$, $\mathcal{U}\cap \mathcal{V}\neq \emptyset$.
Take $\mu\in \mathcal{U}\cap \mathcal{V}$ and let $f\in V$ such that $\mu\in \mathcal{M}^\varphi_h(f)$.
Note that $\mu\in\rv_\varphi^{-1}(h)$.
Since $\mu\in\mathcal{E}\subset \mathcal{M}_{T}^e(X)$, by Jenkinson's theorem \cite{Jen06}[Theorem 1], there exists $g\in C(X,\mathbb{R})$ such that $\mathcal{M}_{\rm max}(g)=\{\mu\}$. This implies $\mathcal{M}^\varphi_h(g)=\{\mu\}$ since $\mu\in\rv_{\varphi}^{-1}(h)$.
Hence for $\delta>0$ we have
$\mathcal{M}^\varphi_h(f+\delta g)=\{\mu\}$ and $f+\delta g\in U$.
For sufficiently small $\delta$ we have $f+\delta g\in V$, which complete the proof.
\end{proof}

\begin{lemma}%[Analogy for \cite{Mor2010} Lemma 3.4 ]
\label{dense2}
Let $U\subset C(X,\mathbb{R})$ be a dense subset. Then
\[
    \mathcal{U}:=\mathcal{E}\cap \bigcup_{f\in U} \mathcal{M}^\varphi_h(f)
\]
is dense in $\mathcal{E}$.
\end{lemma}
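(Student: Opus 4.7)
The plan is to adapt Morris's proof for the unconstrained case \cite{Mor2010} to the constrained setting. Let $\mathcal{V}\subset\mathcal{E}$ be non-empty and open, pick $\nu\in\mathcal{V}$, and write $\mathcal{V}=\mathcal{W}\cap\mathcal{E}$ where $\mathcal{W}$ is open in $\rv_\varphi^{-1}(h)$. The goal is to produce some $\mu\in\mathcal{U}\cap\mathcal{V}$.

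By Jenkinson's theorem \cite{Jen06} applied to the ergodic measure $\nu$, I would first choose $g\in C(X,\mathbb{R})$ with $\mathcal{M}_{\max}(g)=\{\nu\}$; since $\nu\in\rv_\varphi^{-1}(h)$ this gives $\mathcal{M}_h^\varphi(g)=\{\nu\}\subset\mathcal{W}$. By Lemma \ref{open1}, the set
\[
V:=\{f\in C(X,\mathbb{R}):\mathcal{M}_h^\varphi(f)\subset\mathcal{W}\}
\]
is an open neighborhood of $g$ in $C(X,\mathbb{R})$. Density of $U$ yields some $f\in U\cap V$, and by compactness of $\rv_\varphi^{-1}(h)$ the set $\mathcal{M}_h^\varphi(f)$ is a non-empty compact convex subset of $\mathcal{W}$. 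Using the Krein--Milman theorem, I would then pick an extreme point $\mu$ of $\mathcal{M}_h^\varphi(f)$; once one has $\mu\in\mathcal{E}$, the inclusions $\mu\in\mathcal{E}\cap\mathcal{W}=\mathcal{V}$ and $\mu\in\mathcal{M}_h^\varphi(f)$ with $f\in U$ together give $\mu\in\mathcal{U}\cap\mathcal{V}$.

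The main obstacle is showing that the extreme point $\mu$ is ergodic, since extreme points of the face $\mathcal{M}_h^\varphi(f)$ of $\rv_\varphi^{-1}(h)$ need not be so in general. Considering the ergodic decomposition $\mu=\int\mu_\omega\,d\mathbb{P}(\omega)$, once we know that $\rv_\varphi(\mu_\omega)=h$ holds $\mathbb{P}$-a.s., the bound $\int f\,d\mu_\omega\leq\beta_h^\varphi(f)$ combined with the integral identity $\int\!\int f\,d\mu_\omega\,d\mathbb{P}=\beta_h^\varphi(f)$ forces $\mu_\omega\in\mathcal{M}_h^\varphi(f)$ a.s., and the extremality of $\mu$ in this face then yields $\mu_\omega=\mu$ $\mathbb{P}$-a.s., so $\mu$ is ergodic. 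The harder scenario---that $\omega\mapsto\rv_\varphi(\mu_\omega)$ is non-constant---is where density of $\mathcal{E}$ in $\rv_\varphi^{-1}(h)$ (hypothesis \eqref{e_dense}) must be invoked: one regroups the ergodic components of $\mu$ into sub-barycentres with rotation vector $h$ and then uses \eqref{e_dense} to approximate these sub-pieces by elements of $\mathcal{M}_h^\varphi(f)$, producing a nontrivial convex decomposition of $\mu$ inside $\mathcal{M}_h^\varphi(f)$ and contradicting extremality.
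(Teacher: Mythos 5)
Your proposal reproduces the paper's own proof through its first four steps (pick an ergodic $\nu\in\mathcal{V}$, take Jenkinson's $g$ with $\mathcal{M}_{\rm max}(g)=\{\nu\}$, use Lemma \ref{open1} to get the open set $V\ni g$, intersect with the dense set $U$), and you have correctly isolated the point on which everything hinges: for $f\in U\cap V$ one must produce an \emph{ergodic} element of $\mathcal{M}^\varphi_h(f)$. (The paper's proof leaves this implicit: its last line infers $\mathcal{U}\cap\mathcal{V}\neq\emptyset$ from $U\cap V\neq\emptyset$, which tacitly assumes $\mathcal{E}\cap\mathcal{M}^\varphi_h(f)\neq\emptyset$.) However, your resolution of the ``harder scenario'' cannot work, because the statement it would establish --- that under \eqref{e_dense} every extreme point of the face $\mathcal{M}^\varphi_h(f)$ is ergodic --- is false. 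Take $\Omega=\{0,1\}^{\mathbb{N}_0}$ the full shift, $\varphi=\chi_{[1]}$, $h=1/2$; here \eqref{e_dense} holds by Theorem \ref{density_of_periodic_measures}. For $f_0=\chi_{[00]}+\chi_{[11]}$ and any $\rho\in\rv_\varphi^{-1}(1/2)$ one has $\int f_0\,d\rho=\rho([00])+\rho([11])\le 1$, with equality iff $\rho([01])=\rho([10])=0$, i.e.\ iff $\rho$ is carried by the two fixed points; the constraint then forces equal weights. Hence $\mathcal{M}^\varphi_h(f_0)=\{\mu\}$ with $\mu=\tfrac12(\delta_{0^\infty}+\delta_{1^\infty})$, a non-ergodic measure which is nevertheless an extreme point of the fiber (a one-point exposed face). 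Your regrouping step fails here for a structural reason: the ergodic components have rotation vectors $0$ and $1$, and the \emph{only} convex combination of them with rotation vector $1/2$ is $\mu$ itself, so there are no nontrivial sub-barycentres with rotation vector $h$; and although \eqref{e_dense} lets you approximate $\mu$ by ergodic fiber measures (e.g.\ the periodic measures on $(0^n1^n)^\infty$), approximation does not place them \emph{inside} $\mathcal{M}^\varphi_h(f_0)$ (their $f_0$-integral is $1-1/n<1$), so no convex decomposition of $\mu$ inside the face is obtained. This phenomenon is exactly why non-ergodic extreme points of $\rv_\varphi^{-1}(h)$ exist at all.

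Nor can the argument be rescued by the unused information $f\in V$ (i.e.\ $\mathcal{M}^\varphi_h(f)\subset\mathcal{W}$ with $\mathcal{W}$ a small neighbourhood of the ergodic $\nu$). In the same example take $\nu$ to be the periodic measure of $(01)^\infty$ and $g=\chi_{[01]}+\chi_{[10]}$, and set $f'=g+\delta\chi_{B_n}$, where $B_n$ is the clopen set of $x$ whose initial word $x_0\cdots x_{2n+1}$ contains $00$ or $11$ and $1/(2n+1)<\delta$. Comparing against the non-ergodic fiber measure $\tfrac12\mu_{(01)^n0}+\tfrac12\mu_{(01)^n1}$ (whose value is $1-\tfrac{1}{2n+1}+\delta$), and using $\rho(B_n)\le(2n+1)\bigl(\rho([00])+\rho([11])\bigr)$ together with $\rho([11])-\rho([00])=2\rv_\varphi(\rho)-1$, one checks that every \emph{ergodic} $\rho\in\rv_\varphi^{-1}(1/2)$ satisfies $\int f'\,d\rho<\beta^\varphi_h(f')$; thus $\mathcal{E}\cap\mathcal{M}^\varphi_h(f')=\emptyset$ although $\|f'-g\|_\infty=\delta$ is arbitrarily small and $f'\in V$ for $n$ large. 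So $V$ contains functions, arbitrarily close to $g$, with no ergodic relative maximizing measure, and an arbitrary dense $U$ could meet $V$ only in such functions; consequently no argument that uses only $f\in U\cap V$ plus Krein--Milman can close the gap. Repairing the lemma (a gap shared by the paper's own proof) needs a different mechanism --- for instance the Bishop--Phelps perturbation of Lemma \ref{approx}, which is how Lemma \ref{dense1} can be fixed, but which is unavailable here precisely because $f$ must remain inside the given dense set $U$.
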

\begin{proof}
 Take a nonempty open subset $\mathcal{V}\subset \overline{\mathcal{E}}$. 
 We show $\mathcal{V}\cap\mathcal{U}\neq\emptyset$.
 
 By Lemma \ref{open1}, 
 \[
    V:=\{f\in C(X,\mathbb{R}): \mathcal{M}^\varphi_h(f)\subset \mathcal{V}\}
 \]
 is open in $C(X,\mathbb{R})$.
 Since $\mathcal{V}$ is nonempty, there exists $\mu\in \mathcal{V}\cap\mathcal{E}\subset \rv_\varphi^{-1}(h)\cap\mathcal{M}_{T}^e(X)$.
 Since $\mu$ is ergodic, as stated in Lemma \ref{dense1}, there exists $g\in C(X,\mathbb{R})$ such that $\mathcal{M}_{\rm max}(g)=\{\mu\}$ and we have $\mathcal{M}^\varphi_h(g)=\{\mu\}\subset \mathcal{V}$ by Jenkinson's Theorem \cite{Jen06}[Theorem 1].
 Hence $V$ is nonempty. 
 Since $U$ is dense in $C(X,\mathbb{R})$, $U\cap V\neq \emptyset$ and $\mathcal{U}\cap \mathcal{V}\neq \emptyset$.
\end{proof}

%--------------------------------------------------------
\subsection{Generic zero entropy for a symbolic dynamics}
%\red{From Proposition \ref{generalized_density_of_multi-periodic_measures},
%we easily see that generic points in the rotation class of a interior point of the rotation set has zero entropy.}
In this subsection, we apply the lemmas in the previous subsection to the symbolic case and give the proof of Theorem \ref{generic_zero_entropy}.
%we consider generic properties on zero entropy. We begin with the following statement.

%Proposition 2.1 より，Zは\rv_{\varphi}^{-1}hでdense.Corの仮定はProp2.1と同様．

%\begin{corollary}
%\label{residual_set_of_zero_entropy}
%Let (X,T) be a dynamical system with the specification property. 
%Let $\varphi\in C(X)^d$ and $h\in {\rm int}({\rm Rot}(\varphi))$.
%Then the set $\mathcal{Z}=\{\mu\in \rv_{\varphi}^{-1}h: h_\mu=0\}$ is a residual subset of $\rv_{\varphi}^{-1}h$.
%\end{corollary}
%\begin{proof} 
%The argument is the same as in \cite{DGS}[Proposition 22.16, p.223].
%By Proposition \ref{generalized_density_of_multi-periodic_measures}, $\mathcal{N}_T\cap \rv_{\varphi}^{-1}h$ is dense in $\rv_{\varphi}^{-1}h$.
%Hence $\mathcal{Z}$ is dense in $\rv_{\varphi}^{-1}h$.
%Moreover, upper semi-continuity of the entropy map $\mu\mapsto h_\mu$ implies for every $n\geq1$
%\begin{align*}
%    \mathcal{Z}_n:=\{\mu\in \rv_{\varphi}^{-1} h: 0\leq h_\mu<\frac{1}{n}\} \supset \mathcal{N}_T\cap \rv_{\varphi}^{-1}
%h\end{align*}
%is nonempty, open and dense in $\rv_{\varphi}^{-1}h$.
%Hence $\mathcal{Z}=\bigcap_{n\geq 1}\mathcal{Z}_n$ is a residual set in $\rv_{\varphi}^{-1}h.$
%
%\end{proof}

%% Let $(\Omega, \sigma)$ be a subshift of finite type and $\varphi:\Omega \rightarrow\mathbb{Q}$ be a locally constant. 
%% Let $h\in {\rm int}({\rm Rot}(\varphi))\cap \mathbb{Q}$. 
%We now apply the lemmas in the previous subsection to symbolic case.

\begin{proof}[Proof of Theorem \ref{generic_zero_entropy}]

Suppose the hypotheses of Theorem \ref{generic_zero_entropy} hold.
As in Corollary \ref{residual_set_of_zero_entropy}, for each $n\geq1$
\begin{align*}
    \mathcal{Z}_n:=\left\{\mu\in \rv_{\varphi}^{-1} (h): 0\leq H_\mu<\frac{1}{n}\right\} \supset \mathcal{N}_{\sigma}\cap \rv_{\varphi}^{-1}
(h)\end{align*}
 is nonempty, open and dense subset in $\rv_{\varphi}^{-1}(h)$.
Moreover, by Theorem \ref{density_of_periodic_measures}, we have
\begin{align*}
    \overline{\rv_{\varphi}^{-1}(h)\cap \mathcal{M}_\sigma^p(\Omega)}=\rv_{\varphi}^{-1}(h), 
\end{align*}
which implies \eqref{e_dense}.

Fix $n\geq1$. 
Since \eqref{e_dense} holds, we can apply Lemmas \ref{open1} and \ref{dense1} to our symbolic case. Therefore, we see that
\begin{align*}
    U_n:=\{f\in C(\Omega,\mathbb{R}): \mathcal{M}^\varphi_h(f)\subset \mathcal{Z}_n\}
\end{align*}
is open in $C(\Omega,\mathbb{R})$ and
\begin{align*}
    \widehat{U}_n:=\{f\in C(\Omega,\mathbb{R}): \mathcal{M}_h^\varphi(f)=\{\mu\}\ \mbox{for some}\ \mu\in \mathcal{Z}_n\}
    \subset U_n
\end{align*}
is dense in $C(\Omega,\mathbb{R})$.
Hence $U_n$ is an open dense subset of $C(\Omega,\mathbb{R})$. 

Then the set
\begin{align*}
    R:=\bigcap_{n\geq1} U_n
    =\left\{f\in C(\Omega,\mathbb{R}): \mathcal{M}_h^\varphi(f)\subset \bigcap_{n\geq1}\mathcal{Z}_n\right\}
\end{align*}
is a residual subset of $C(\Omega,\mathbb{R})$, and the proof is complete. 
\end{proof}

%----------------------------------------------
% **********************************************************
% Appendices
% **********************************************************

\setcounter{equation}{0}
\renewcommand{\theequation}{\Alph{section}.\arabic{equation}}

\appendix

%\begin{comment}
\section{On positive entropy}\label{AppendixA}
 %\section*{Appendix}
 In this appendix, we see that for generic continuous function every relative maximizing measure has positive entropy under some constraints, which is a trivial consequence of ergodic optimization.
     \begin{proposition}
     \label{positive_entropy}
         Let $T:X\rightarrow X$ be a continuous map on a compact metric space $X$ with an ergodic invariant probability measure $\mu$ having positive entropy.
         Then there exist $\varphi\in C(X,\mathbb{R})$ and $h\in {\rm Rot}(\varphi)$ such that
         for generic $f\in C(X,\mathbb{R})$ every relative maximizing measure of $f$ with the constraint $h\in {\rm Rot}(\varphi)$ has positive entropy.
     \end{proposition}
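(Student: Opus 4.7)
The plan is to reduce this to Jenkinson's theorem by choosing a constraint $\varphi$ that pins the rotation class to a single measure with positive entropy; then genericity becomes automatic, indeed the statement holds for \emph{every} $f\in C(X,\mathbb{R})$.

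First I would invoke Jenkinson's theorem \cite{Jen06}[Theorem 1], applied to the given ergodic measure $\mu$ with $h_\mu>0$: there exists $\varphi\in C(X,\mathbb{R})$ such that $\mathcal{M}_{{\rm max}}(\varphi)=\{\mu\}$. Set $h:=\int\varphi\,d\mu=\beta(\varphi)$; by definition $h\in{\rm Rot}(\varphi)$.

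Next I would show that the rotation class of $h$ is a singleton. Indeed, if $\nu\in\rv_\varphi^{-1}(h)$, then $\int\varphi\,d\nu=h=\beta(\varphi)$, which means $\nu\in\mathcal{M}_{{\rm max}}(\varphi)=\{\mu\}$. Hence $\rv_\varphi^{-1}(h)=\{\mu\}$.

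From this, the conclusion is immediate: for \emph{any} $f\in C(X,\mathbb{R})$ we have
\[
\beta^\varphi_h(f)=\sup_{\nu\in\rv_\varphi^{-1}(h)}\int f\,d\nu=\int f\,d\mu,
\]
so $\mathcal{M}^\varphi_h(f)=\{\mu\}$. Since $h_\mu>0$ by hypothesis, every relative maximizing measure of $f$ with the constraint $h$ has positive entropy. In particular this is true for a generic (indeed, every) $f\in C(X,\mathbb{R})$. There is no real obstacle here, since the freedom to choose $\varphi$ allows us to collapse the rotation class to a single prescribed ergodic measure; the point of the proposition is just that such pathological constraints exist, illustrating the contrast with Morris' theorem discussed in the introduction.
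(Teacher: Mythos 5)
Your proof is correct and follows essentially the same route as the paper: apply Jenkinson's theorem to obtain $\varphi$ with $\mathcal{M}_{\rm max}(\varphi)=\{\mu\}$, observe that the rotation class $\rv_{\varphi}^{-1}(h)$ with $h=\rv_{\varphi}(\mu)=\beta(\varphi)$ collapses to $\{\mu\}$, and conclude for every $f$. Your explicit justification that $\rv_{\varphi}^{-1}(h)\subset\mathcal{M}_{\rm max}(\varphi)$ is a welcome detail the paper leaves implicit.
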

     \begin{proof}
        By Jenkinson's theorem \cite{Jen06}[Theorem 1], there exists $\varphi \in C(X,\mathbb{R})$ such that $\mathcal{M}_{\rm max}(\varphi)=\{\mu\}$.
        Let $h:=\rv_{\varphi}(\mu)$.
        Then $\mathcal{M}^\varphi_h(f)\subset \rv_{\varphi}^{-1}(h)=\{\mu\}$ for all $f\in C(X,\mathbb{R})$.
        Since $\mathcal{M}^\varphi_h(f)$ is not empty, we have $\mathcal{M}^\varphi_h(f)=\{\mu\}$, which implies the unique relative maximizing measure of $f$ with constraint $h\in {\rm Rot}(\varphi)$ has positive entropy.
     \end{proof}

\section{Prevalent uniqueness}\label{AppendixB}
In this paper we focus on generic property of continuous functions in constrained setting. 
On the other hand, measure-theoretic ``typicality" which is known as {\it prevalence} is also important. 
A subset $\mathcal{P}$ of $C(X,\mathbb{R})$ is {\it prevalent} if there exists a  compactly supported Borel probability measure $m$ on $C(X,\mathbb{R})$ such that the set $f+\mathcal{P}$ has full $m$-measure for every $f\in C(X,\mathbb{R})$. 

Prevalent uniqueness of maximizing measures for continuous functions in the literature on (unconstrained) ergodic optimization is recently established by Morris \cite[Theorem 1]{Mor21}.
Moreover the result is generalized to more abstract statement, which yields prevalent uniqueness in constrained setting. 

\begin{corollary}
Let $T:X\rightarrow X$ be a continuous map on a compact metric space $X$.
For a continuous constraint $\varphi: X\rightarrow\mathbb{R}^d$ and a rotation vector $h\in {\rm Rot}(\varphi)$, the set
\begin{align*}
    \{f\in C(X,\mathbb{R}): \mathcal{M}^\varphi_h(f)\ \mbox{is a singleton}\}
\end{align*}
is prevalent.
\end{corollary}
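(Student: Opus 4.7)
The plan is to derive this corollary from an abstract generalization of Morris's prevalent-uniqueness theorem \cite[Theorem 1]{Mor21}. In its original form Morris's theorem asserts that the set of $f \in C(X, \mathbb{R})$ for which $\mathcal{M}_{\rm max}(f)$ is a singleton is prevalent. Inspection of the proof in \cite{Mor21} shows that the argument does not rely on specific properties of the full set $\mathcal{M}_T(X)$, but only on the fact that the candidate maximizing measures range over some weak*-compact convex set of Borel probability measures on $X$. Thus the first step would be to extract the following abstract statement: for every weak*-compact convex subset $\mathcal{K}$ of the space of Borel probability measures on $X$, the set of $f \in C(X, \mathbb{R})$ for which the maximum of $\mu \mapsto \int f \, d\mu$ over $\mathcal{K}$ is attained at a unique measure is prevalent in $C(X, \mathbb{R})$.

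The next step is to verify that the rotation class $\mathcal{K} := \rv_\varphi^{-1}(h)$ satisfies the hypotheses of this abstract theorem. Since $\varphi \in C(X, \mathbb{R}^d)$, the map $\rv_\varphi : \mathcal{M}_T(X) \to \mathbb{R}^d$ is continuous with respect to the weak*-topology, so $\mathcal{K}$ is weak*-closed and hence weak*-compact as a closed subset of $\mathcal{M}_T(X)$. Its convexity follows from the linearity of $\mu \mapsto \int \varphi \, d\mu$, and it is nonempty by the assumption $h \in {\rm Rot}(\varphi)$. Identifying $\mathcal{M}_h^\varphi(f)$ as the set of maximizers of $\mu \mapsto \int f \, d\mu$ over $\mathcal{K}$, the abstract theorem immediately yields that $\mathcal{M}_h^\varphi(f)$ is a singleton for a prevalent set of $f \in C(X, \mathbb{R})$.

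The main obstacle is the abstract reformulation itself. The core ingredients in \cite{Mor21} are the construction of a compactly supported probe measure on $C(X, \mathbb{R})$, together with a Baire-type / measure-theoretic argument ruling out non-uniqueness of the tangent measure for prevalently many translates $f + g$. I would need to trace through that construction and verify that the probe measure does not depend on $T$-invariance or any structural property of $\mathcal{M}_T(X)$ beyond its being a weak*-compact convex set of positive Borel measures of uniformly bounded mass; if so, the same probe works verbatim for an arbitrary such $\mathcal{K}$, and in particular for $\mathcal{K} = \rv_\varphi^{-1}(h)$, completing the proof.
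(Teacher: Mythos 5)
Your proposal is correct and follows essentially the same route as the paper: the paper's proof simply observes that $\rv_\varphi^{-1}(h)$ is a nonempty compact subset of $C(X,\mathbb{R})^*$ and invokes the abstract version of Morris's result, which is already stated as Theorem~2 in \cite{Mor21}. The ``main obstacle'' you identify --- extracting the abstract reformulation for a general weak*-compact (convex) set of measures --- therefore requires no work, since Morris has already recorded that generalization explicitly.
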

\begin{proof}
   Since $\rv_{\varphi}^{-1}(h)\subset \mathcal{M}_T(X)$ is a nonempty compact set in $C(X,\mathbb{R})^*$, 
   the statement follows from Theorem 2 in \cite{Mor21}.
\end{proof}

\vspace*{33pt}

\noindent
\textbf{Acknowledgement.}~ 
The first author was partially supported by JSPS KAKENHI Grant Number 22H01138 and
the second author was partially supported by JSPS KAKENHI Grant Number 21K13816.

\noindent
\textbf{Data Availability.}~
Data sharing not applicable to this article as no datasets were generated or analyzed during the current study.

%%%%%%%%%%%%%%%%%%%%%%%%%%%%%%%%%%%%%%%%%%%%%%%%%%%%%%%%
% \begin{thebibliography}{}
% \bibitem[Bou01]{Bou01} LA CONDITION DE WALTERS
% \bibitem[Cha11]{Cha11} Zero-temperature limit of one-dimensional Gibbs states via renormalization: the case of locally constant potentials. 
% \bibitem[Jen01]{Jen01} Rotation, entropy, and equilibrium states
% \bibitem[Jen06]{Jen06} Every ergodic measure is uniquely maximizing
% \bibitem[GarLop2018]{GarLop2018} Functions for relative maximization
% \bibitem[DGS]{DGS} Ergodic theory on compact spaces
% \bibitem[Mor2010]{Mor2010} Ergodic optimization for generic continuous functions
% \bibitem[Lep14]{Lep14} Zero temperature and selection of maximizing measure, http://www.math.univ-brest.fr/perso/renaud.leplaideur/KATHMANDU.pdf
% \end{thebibliography}

\bibliographystyle{alpha}
\bibliography{CEO_Morris.bib}

\end{document}